\providecommand{\U}[1]{\protect \rule{.1in}{.1in}}
\newtheorem{theorem}{Theorem}
\newtheorem{definition}{Definition}
\newtheorem{lemma}{Lemma}
\newtheorem{proposition}{Proposition}
\newtheorem{remark}{Remark}
\numberwithin{equation}{section}
\newenvironment{proof}{\paragraph{Proof:}}{\hfill$\square$}
\journal{...}
\begin{document}

\begin{frontmatter}
\title{The global existence of solutions and their asymptotic stability for a reaction-diffusion system}
\author[Teb]{Salem Abdelmalek}
\author[Tai2]{Samir Bendoukha}
\author[Roc]{Mokhtar Kirane}
\address[Teb]{Department of Mathematics, University of Tebessa 12002 Algeria. sallllm@gmail.com}
\address[Tai2]{Department of Electrical Engineering, College of Engineering,
Yanbu, Taibah University, Saudi Arabia. sbendoukha@taibahu.edu.sa}
\address[Roc]{LaSIE, Facult\'{e} des Sciences, Pole Sciences et Technologies, Universit\'{e} de La Rochelle, Avenue M. Cr\'{e}peau, 17042 La Rochelle Cedex, France. mokhtar.kirane@univ-lr.fr.}
\begin{abstract}
This paper studies the solutions of a reaction--diffusion system with
nonlinearities that generalize the Lengyel--Epstein and FitzHugh--Nagumo
nonlinearities. Sufficient conditions are derived for the global asymptotic
stability of the system's solutions. Furthermore, we present some numerical
examples.
\end{abstract}
\begin{keyword}
Reaction--diffusion equations; Lengyel--Epstein system; FitzHugh--Nagumo model;
global asymptotic stability; Lyapunov functional.
\end{keyword}
\end{frontmatter}

\section{Introduction}

Reaction--diffusion systems are of great importance in many scientific and
engineering fields due to their ability to model numerous real life
phenomena. one of the most interesting of these phenomena is that of
morphogenesis, which is the biological process that causes organisms to
develop specific shapes and patterns. One of the very early works on
morphogenesis was conducted by Alan Turing in 1952 \citep{Turing1952}, where
he anticipated that diffusion driven instability in reaction--diffusion
systems leads to pattern formation. This theory was confirmed by the
concrete experiment of De--Kepper \textit{et al.} \citep{DeKepper1990} many
decades later through the chlorite--iodide--malonic acid--starch (CIMA)
chemical reaction in an open unstirred gel reactor. A model referred to as
Lengyel--Epstein was soon developped for the experiment in %
\citep{Lengyel1991,Lengyel1992}.

Many studies have been carried out on the dynamics of the Lengyel--Epstein
model. The study of Jang \textit{et al.} \citep{Jang2004} concerns the
global bifurcation structure of the set of non-constant steady states in the
one-dimensional case. The study in \citep{Ni2005} looked further into the
analytic characteristics of the model and showed that the initial
concentrations, size of reactor, and diffusion rates must be sufficiently
large to achieve Turing instability. The precise conditions on the model's
parameters that lead to the instability were later coined in \citep{Yi2008}.
The same authors also considered the global asymptotic behaviour of the
model in \citep{Yi2009}. Dynamics of the two-dimensional case where
discussed in \citep{Wang2013}. The findings of \citep{Ni2005} and %
\citep{Yi2009} in terms of the sufficient conditions for global asymptotic
stability were confirmed and extended in \citep{Lisena2014}.

Since the Lengyel--Epstein model only considers a single reaction, it is of
particular importance to generalise its reactions terms to encompass other
variations of the model. A first attempt to generalise the model was
achieved in \citep{Abdelmalek2017a}, where it was shown how other
approximations of the reaction term could be studied in a general way. The
same model was studied again in \citep{Abdelmalek2017b}, where the authors
estabished sufficient conditions for the non--existence of Turing patterns.
The authors also followed on the footsteps of \citep{Lisena2014} to relax
the global asymptotic stability conditions. Another study related to the
same model is \citep{Abdelmalek2017c}, whre the authors established the
boundedness of solutions.

This paper presents a broader generalisation of the Lengyel--Epstein model
to encompass many existing systems such as the FitzHugh--Nagumo model %
\citep{Oshita2003,Doelman2009} as will be shown later on in Section \ref%
{SecApp}. In Section \ref{SecModel}, we will present the general model
proposed in this paper. In the consequent sections, we will study the
dynamics of the proposed system.

\section{System Model\label{SecModel}}

In this paper, we consider the reaction--diffusion system%
\begin{equation}
\left \{ 
\begin{array}{l}
\frac{\partial u}{\partial t}-d_{1}\Delta u=\left( f\left( u\right) -\lambda
v\right) \varphi \left( u\right) :=F\left( u,v\right) \text{ \ \ \ in }%
\mathbb{R}^{+}\times \Omega ,\smallskip \\ 
\frac{\partial v}{\partial t}-d_{2}\Delta v=\sigma \left( g\left( u\right)
-v\right) \varphi \left( u\right) :=G\left( u,v\right) \text{ \ \ \ in }%
\mathbb{R}^{+}\times \Omega ,%
\end{array}%
\right.  \label{1.1}
\end{equation}%
where $\Omega $ is a bounded domain in $%
\mathbb{R}
^{n}$ with smooth boundary $\partial \Omega $ and $\Delta $ is the Laplacian
operator on $\Omega $. We assume non-negative continuous and bounded initial
data%
\begin{equation}
u\left( 0,x\right) =u_{0}\left( x\right) ,\text{ \ }v\left( 0,x\right)
=v_{0}\left( x\right) \text{ \ \ \ \ \ \ \ \ \ \ in }\Omega ,  \label{1.2}
\end{equation}%
where $u_{0}\left( x\right) ,v_{0}\left( x\right) \in C^{2}\left( \Omega
\right) \cap C\left( \overline{\Omega }\right) $, and homogoneous Neumann
boundary conditions 
\begin{equation}
\dfrac{\partial u}{\partial \nu }=\dfrac{\partial v}{\partial \nu }=0\ \ 
\text{\ \ \ \ \ on \ \ \ }\mathbb{R}^{+}\times \partial \Omega ,  \label{1.3}
\end{equation}%
with $\nu $ being the unit outer normal to $\partial \Omega $.

The constants $d_{1},d_{2},\lambda ,$ and $\sigma $ are assumed to be
strictly positive control parameters. Note that $d_{1}$ and $d_{2}$
represent the diffusivity constants, which in a chemical reaction are
proportional to the ratios between the molar flux and the concentration
gradient of the reactants. The functions\ $\varphi ,g$ and $f$ are assumed
to be continuously differentiable on $\mathbb{R}^{+}$ such that for some $%
\delta \in 
\mathbb{R}
^{+}$,

\begin{equation}
\varphi \left( 0\right) =0,f\left( \delta \right) =0,  \label{con1}
\end{equation}%
and for $u\in \left( 0,\delta \right) $,%
\begin{equation}
g(u),f\left(u\right),\varphi\left(u\right) >0,  \label{con5}
\end{equation}%
and%
\begin{equation}
g^{\prime }(u)\geq 0.  \label{con2}
\end{equation}%
We also suppose that there exists a positive constant $\alpha \in \left(
0,\delta \right) $ such that%
\begin{equation}
\lambda g\left( \alpha \right) =f\left( \alpha \right),  \label{con3}
\end{equation}%
and%
\begin{equation}
\left( \alpha -u\right) \left[ f\left( u\right) -\lambda g\left( u\right) %
\right] >0\text{ \ for\ }u\in \left( 0,\alpha \right) \cup \left( \alpha
,\delta \right) .  \label{con4}
\end{equation}

\section{Preliminaries}

In this section, we will present some preliminary results. First, we show
that subject to (\ref{con2}), the system has an invariant region. Then, we
identify a unique equilibrium solution for the ODE system and establish its
local asymptotic stability under certain conditions. Finally, the local
asymptotic stability of the steady state solution in the presence of
diffusions is established under some sufficient conditions.

\subsection{Invariant Regions}

In this subsection, we examine the invariant regions for the system (\ref%
{1.1}).

\begin{definition}[\citep{Yi2009,Mottoni1979}]
\label{DefInvariant}A rectangle $\Re =\left( 0,r_{1}\right) \times \left(
0,r_{2}\right) $ is called an invariant rectangle if the vector field $%
\left( F,G\right) $ on the boundary $\partial \Re $ points inside, that is,%
\begin{equation*}
\left \{ 
\begin{array}{c}
F\left( 0,v\right) \geq 0\text{ and }F\left( r_{1},v\right) \leq 0\text{ for 
}0<v<r_{2}, \\ 
G\left( u,0\right) \geq 0\text{ and }G\left( u,r_{2}\right) \leq 0\text{ for 
}0<u<r_{1}.%
\end{array}%
\right.
\end{equation*}
\end{definition}

\begin{proposition}
\label{PropAttract}The system (\ref{1.1}), with condition (\ref{con2}) has
the invariant region%
\begin{equation*}
\Re =\left( 0,\delta \right) \times \left( 0,g\left( \delta \right) \right) .
\end{equation*}
\end{proposition}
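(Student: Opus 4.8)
The plan is to appeal directly to Definition \ref{DefInvariant} together with the classical invariant region principle for reaction--diffusion systems with diagonal diffusion (see \citep{Yi2009,Mottoni1979} and the references therein): since $\Re$ is a bounded rectangle and $F,G$ are $C^{1}$ on $\overline{\Re}$ (because $f,g,\varphi\in C^{1}(\mathbb{R}^{+})$), it suffices to check that the vector field $(F,G)$ points inward along the four sides of $\partial\Re$. Note that $g(\delta)>0$: by (\ref{con2}) the function $g$ is nondecreasing on $[0,\delta]$, and $g>0$ on $(0,\delta)$ by (\ref{con5}), so $g(\delta)\geq g(u)>0$; hence the rectangle is nondegenerate. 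The rectangular shape of $\Re$ is exactly what lets the argument go through even though $d_{1}\neq d_{2}$ in general, so no compatibility condition between the diffusivities is required.

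First I would treat the two vertical sides, where $u$ is fixed. On $u=0$ one has $F(0,v)=\bigl(f(0)-\lambda v\bigr)\varphi(0)=0$ by (\ref{con1}), so trivially $F(0,v)\geq 0$. On $u=\delta$ one has $f(\delta)=0$ by (\ref{con1}), hence $F(\delta,v)=-\lambda v\,\varphi(\delta)$; since $\varphi>0$ on $(0,\delta)$ by (\ref{con5}), continuity gives $\varphi(\delta)\geq 0$, and together with $\lambda>0$ and $0<v<g(\delta)$ this yields $F(\delta,v)\leq 0$.

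Next I would treat the two horizontal sides, where $u\in(0,\delta)$ is fixed. On $v=0$ one has $G(u,0)=\sigma g(u)\varphi(u)\geq 0$ because $\sigma>0$ and $g(u),\varphi(u)>0$ by (\ref{con5}). On $v=g(\delta)$ one has $G(u,g(\delta))=\sigma\bigl(g(u)-g(\delta)\bigr)\varphi(u)$; here condition (\ref{con2}), i.e. $g'\geq 0$, makes $g$ nondecreasing on $[0,\delta]$, so $g(u)\leq g(\delta)$ for $u\in(0,\delta)$, and since $\varphi(u)>0$ and $\sigma>0$ we get $G(u,g(\delta))\leq 0$.

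Having verified all four inequalities of Definition \ref{DefInvariant}, the invariant region theorem applies and $\overline{\Re}$ is positively invariant for (\ref{1.1}). The only delicate point is bookkeeping rather than substance: one must quote the version of the invariant region principle that is valid for systems with unequal diffusion coefficients — which is precisely the version requiring the invariant set to be a rectangle — and one should state the conclusion for the closed rectangle $\overline{\Re}$ while the boundary sign conditions are checked on the open sides. It is worth stressing that monotonicity of $g$ is used \emph{only} for the top side $v=g(\delta)$; this is exactly why (\ref{con2}) is singled out as the hypothesis needed for invariance, whereas (\ref{con1}) and (\ref{con5}) handle the remaining three sides.
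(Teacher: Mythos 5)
Your proof is correct and follows essentially the same route as the paper: verify the four inequalities of Definition \ref{DefInvariant} on the sides of the rectangle, using $\varphi(0)=0$ and $f(\delta)=0$ from (\ref{con1}) for the vertical sides, positivity from (\ref{con5}) for the bottom, and the monotonicity of $g$ from (\ref{con2}) for the top (your handling of $u=\delta$ is in fact more direct than the paper's detour through $f(\alpha)=\lambda g(\alpha)$). The only caveat is on the side $u=0$: since $f$ is only assumed $C^{1}$ on the open half-line and may blow up as $u\rightarrow 0^{+}$ (as it does in the Lengyel--Epstein application), one should argue via $F(0,v)=\lim_{u\rightarrow 0^{+}}f(u)\varphi(u)-\lambda v\,\varphi(0)\geq 0$ as the paper does, rather than evaluating $f(0)$ directly.
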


\begin{proof}
From condition (\ref{con1}), we obtain 
\begin{equation*}
F\left( 0,v\right) =\lim_{u\rightarrow 0^{+}}f\left( u\right) \varphi \left(
u\right) \geq 0.
\end{equation*}

Also, using (\ref{con1}) and (\ref{con3}), we conclude that 
\begin{equation*}
\left( f\left( \delta \right) -f\left( \alpha \right) \right) -\lambda
v-g\left( \alpha \right) =-\left( 1+\lambda \right) g\left( \alpha \right)
-\lambda v\leq 0.
\end{equation*}%
It follows that%
\begin{equation*}
F\left( \delta ,v\right) =\left[ \left( f\left( \delta \right) -f\left(
\alpha \right) \right) -\lambda \left( v-g\left( \alpha \right) \right) %
\right] \varphi \left( \delta \right) \leq 0,
\end{equation*}%
\begin{equation*}
G\left( u,0\right) =\sigma \varphi \left( u\right) \left( g\left( u\right)
-0\right) =\sigma \varphi \left( u\right) g\left( u\right) \geq 0,
\end{equation*}%
and%
\begin{equation*}
G\left( u,g\left( \delta \right) \right) =\sigma \varphi \left( u\right)
\left( g\left( u\right) -g\left( \delta \right) \right) \leq 0.
\end{equation*}%
This concludes the proof.
\end{proof}

\subsection{Equilibrium Solutions and ODE Stability}

This section studies the uniform equilibrium solutions of the
reaction--diffusion system (\ref{1.1}). In the absense of diffusion, the
system reduces to%
\begin{equation}
\left \{ 
\begin{array}{l}
\frac{du}{dt}=\varphi \left( u\right) \left( f\left( u\right) -\lambda
v\right) \text{ \ \ in }\mathbb{R}^{+},\smallskip \\ 
\frac{dv}{dt}=\frac{\sigma }{\lambda }\varphi \left( u\right) \left( \lambda
g\left( u\right) -\lambda v\right) \text{ \ \ in }\mathbb{R}^{+}.%
\end{array}%
\right.  \label{2.2}
\end{equation}

\begin{proposition}
\label{PropEquilibrium}The system (\ref{2.2}) has the unique constant steady
state solution%
\begin{equation}
\left( u^{\ast },v^{\ast }\right) =\left( \alpha ,g\left( \alpha \right)
\right) .  \label{2.2a}
\end{equation}%
If the inequality%
\begin{equation}
f^{\prime }\left( \alpha \right) <\min \left \{ \sigma ,\lambda g^{\prime
}\left( \alpha \right) \right \}  \label{2.3}
\end{equation}%
is satisfied, then the solution is a locally asymptotically stable
equilibrium for the system (\ref{2.2}).
\end{proposition}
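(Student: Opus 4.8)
The plan is to split the statement into two parts: first, verifying that $(\alpha, g(\alpha))$ is the unique constant steady state, and second, establishing its local asymptotic stability via linearization. For the uniqueness claim, I would observe that a constant steady state of \eqref{2.2} must satisfy $\varphi(u)(f(u)-\lambda v)=0$ and $\varphi(u)(\lambda g(u)-\lambda v)=0$. Since we are interested in equilibria lying in the invariant region $\Re=(0,\delta)\times(0,g(\delta))$ (the biologically relevant regime), condition \eqref{con5} gives $\varphi(u)>0$ there, so the system collapses to $f(u)=\lambda v$ and $v=g(u)$, i.e. $f(u)=\lambda g(u)$. By \eqref{con3} this holds at $u=\alpha$, and by \eqref{con4} the quantity $f(u)-\lambda g(u)$ is strictly positive on $(0,\alpha)$ and strictly negative on $(\alpha,\delta)$, hence has no other zero in $(0,\delta)$. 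Therefore $u^\ast=\alpha$ and $v^\ast=g(\alpha)$ is the unique steady state, which is \eqref{2.2a}.

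For the stability part, I would compute the Jacobian of the reaction vector field $(F_0,G_0)$, where $F_0(u,v)=\varphi(u)(f(u)-\lambda v)$ and $G_0(u,v)=\frac{\sigma}{\lambda}\varphi(u)(\lambda g(u)-\lambda v)$, evaluated at $(\alpha,g(\alpha))$. Because $f(\alpha)-\lambda g(\alpha)=0$ and $\lambda g(\alpha)-\lambda g(\alpha)=0$, every term in which $\varphi'(\alpha)$ would appear multiplies a vanishing factor, so the Jacobian simplifies to
\begin{equation*}
J=\varphi(\alpha)\begin{pmatrix} f'(\alpha) & -\lambda \\ \sigma g'(\alpha) & -\sigma \end{pmatrix}.
\end{equation*}
Local asymptotic stability then follows from the Routh--Hurwitz criterion: one needs $\operatorname{tr} J<0$ and $\det J>0$. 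Here $\operatorname{tr} J=\varphi(\alpha)(f'(\alpha)-\sigma)$ and $\det J=\varphi(\alpha)^2\sigma(-f'(\alpha)+\lambda g'(\alpha))$. Since $\varphi(\alpha)>0$ and $\sigma>0$, the trace is negative precisely when $f'(\alpha)<\sigma$, and the determinant is positive precisely when $f'(\alpha)<\lambda g'(\alpha)$; both are guaranteed by hypothesis \eqref{2.3}.

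I do not expect a serious obstacle here — the computation is short and the structure is the classical two-species Routh--Hurwitz analysis. The one point requiring a little care is justifying that the $\varphi'(\alpha)$ contributions genuinely drop out (they do, because each is multiplied by $f(\alpha)-\lambda g(\alpha)=0$), and the mild subtlety that "unique" should be read relative to the invariant region $(0,\delta)\times(0,g(\delta))$, since $\varphi$ may vanish elsewhere and generate spurious equilibria on the boundary $u=0$. Beyond that, the argument is a direct verification, and linearized stability implies local asymptotic stability for the nonlinear ODE system by the standard Hartman--Grobman / Lyapunov linearization theorem.
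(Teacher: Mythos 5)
Your proposal is correct and follows essentially the same route as the paper: reduce the equilibrium equations to $f(u)=\lambda g(u)$ and invoke (\ref{con3})--(\ref{con4}) for uniqueness, then linearize, observe that the $\varphi'(\alpha)$ terms vanish because they multiply $f(\alpha)-\lambda g(\alpha)=0$, and apply the trace/determinant (Routh--Hurwitz) criterion under (\ref{2.3}). Your treatment of uniqueness is in fact slightly more explicit than the paper's, which simply asserts it from (\ref{con3}) and (\ref{con4}).
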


\begin{proof}
An equilibrium solution $\left( u^{\ast },v^{\ast }\right) $ satisfies%
\begin{equation*}
\left \{ 
\begin{array}{l}
\left[ \left( f\left( u^{\ast }\right) -f\left( \alpha \right) \right)
-\lambda \left( v^{\ast }-g\left( \alpha \right) \right) \right] \varphi
\left( u^{\ast }\right) =0, \\ 
\sigma \left[ \left( g\left( u^{\ast }\right) -g\left( \alpha \right)
\right) -\left( v^{\ast }-g\left( \alpha \right) \right) \right] \varphi
\left( u^{\ast }\right) =0.%
\end{array}%
\right.
\end{equation*}%
It is easy to see that $\left( \alpha ,g\left( \alpha \right) \right) $ is
the solution to this system thanks to conditions (\ref{con3}) and (\ref{con4}%
). It remains now to study the local asymptotic stability of the solution.
The Jacobian matrix is%
\begin{equation}
J\left( u,v\right) =\left( 
\begin{array}{cc}
F_{u}\left( u,v\right) & F_{v}\left( u,v\right) \\ 
G_{u}\left( u,v\right) & G_{v}\left( u,v\right)%
\end{array}%
\right) ,  \label{Jacob1}
\end{equation}%
where 
\begin{equation*}
F_{v}=-\lambda \varphi \left( u\right) ,\  \ G_{v}=-\sigma \varphi \left(
u\right) ,
\end{equation*}%
\begin{equation*}
F_{u}=\left( \left[ \left( f\left( u\right) -f\left( \alpha \right) \right)
-\lambda \left( v-g\left( \alpha \right) \right) \right] \varphi ^{\prime
}\left( u\right) +f^{\prime }\left( u\right) \varphi \left( u\right) \right)
,
\end{equation*}%
and%
\begin{equation*}
G_{u}=\sigma \left( \left( g\left( u\right) -v\right) \varphi ^{\prime
}\left( u\right) +g^{\prime }\left( u\right) \varphi \left( u\right) \right)
.
\end{equation*}%
Evaluating these derivatives for the equilibrium solution yields%
\begin{equation*}
F_{v}\left( \alpha ,g\left( \alpha \right) \right) =-\lambda \varphi \left(
\alpha \right) ,
\end{equation*}%
\begin{equation*}
G_{v}\left( \alpha ,g\left( \alpha \right) \right) =-\sigma \varphi \left(
\alpha \right) ,
\end{equation*}%
\begin{eqnarray*}
F_{u}\left( \alpha ,g\left( \alpha \right) \right) &=&\left( \left[ \left(
f\left( \alpha \right) -f\left( \alpha \right) \right) -\lambda \left(
g\left( \alpha \right) -g\left( \alpha \right) \right) \right] \varphi
^{\prime }\left( \alpha \right) +f^{\prime }\left( \alpha \right) \varphi
\left( \alpha \right) \right) \\
&=&f^{\prime }\left( \alpha \right) \varphi \left( \alpha \right) ,
\end{eqnarray*}%
and%
\begin{equation*}
G_{u}\left( \alpha ,g\left( \alpha \right) \right) =\sigma g^{\prime }\left(
\alpha \right) \varphi \left( \alpha \right) .
\end{equation*}%
Consequently,%
\begin{equation*}
J\left( u^{\ast },v^{\ast }\right) =\left( 
\begin{array}{cc}
f^{\prime }\left( \alpha \right) \varphi \left( \alpha \right) & -\lambda
\varphi \left( \alpha \right) \\ 
\sigma g^{\prime }\left( \alpha \right) \varphi \left( \alpha \right) & 
-\sigma \varphi \left( \alpha \right)%
\end{array}%
\right) .
\end{equation*}%
As the trace given by%
\begin{equation*}
{\text{tr}}J\left( u^{\ast },v^{\ast }\right) =\left[ f^{\prime }\left(
\alpha \right) -\sigma \right] \varphi \left( \alpha \right) <0,
\end{equation*}

and the determinant given by%
\begin{equation}
\det J\left( u^{\ast },v^{\ast }\right) =\sigma \varphi ^{2}\left( \alpha
\right) \left[ \lambda g^{\prime }\left( \alpha \right) -f^{\prime }\left(
\alpha \right) \right] >0,
\end{equation}%
the equilibrium is then locally asymptotically stable.
\end{proof}

\begin{remark}
Observe that $F_{v}\left( u^{\ast },v^{\ast }\right) <0,$ $G_{v}\left(
u^{\ast },v^{\ast }\right) <0$ and\ $G_{u}\left( u^{\ast },v^{\ast }\right)
>0$. Recall that $\varphi \left( \alpha \right) $ is strictly positive.
Hence, if 
\begin{equation}
F_{u}\left( u^{\ast },v^{\ast }\right) =f^{\prime }\left( \alpha \right)
\varphi \left( \alpha \right) >0  \label{2.5}
\end{equation}%
is satisfied, then $u$ is called an activator, $v$ is called an inhibitor,
and the system (\ref{2.2}) is an activator--inhibitor system.
\end{remark}

\begin{remark}
Combining the activator-inhibitor condition\ (\ref{2.5}) with the stability
condition\ (\ref{2.3}), we find that the condition%
\begin{equation}
0<f^{\prime }\left( \alpha \right) <\min \left \{ \sigma ,\lambda g^{\prime
}\left( \alpha \right) \right \}  \label{2.6}
\end{equation}%
makes the model (\ref{2.2}) a diffusion-free stable activator-inhibitor
system.
\end{remark}

\subsection{PDE Stability}

Let us consider the local asymptotic stability of the steady state solutions
in the PDE case. Let $0=\lambda _{0}<\lambda _{1}\leq \lambda _{3}\leq ....$
be the sequence of eigenvalues for ($-\Delta $) subject to the Neumann
boundary conditions on $\Omega ,$ where each $\lambda _{i}$ has multiplicity 
$m_{i}\geq 1$. Also let $\Phi _{ij},1\leq j\leq m_{i}$, (recall that $\Phi
_{0}=const$ and $\lambda _{i}\rightarrow \infty $ at $i\rightarrow \infty $)
be the normalized eigenfunctions corresponding to $\lambda _{i}$. That is, $%
\Phi _{ij}$ and $\lambda _{i}$ satisfy $-\Delta \Phi _{ij}=\lambda _{i}\Phi
_{ij}$ in $\Omega $, with $\frac{\partial \Phi _{ij}}{\partial \nu }=0$ in $%
\partial \Omega $, and $\int_{\Omega }\Phi _{ij}^{2}\left( x\right) dx=1$.

The set $\left\{ \Phi _{ij}:i\geq 0,1\leq j\leq m_{i}\right\} $ forms a
complete orthonormal basis in $L^{2}\left( \Omega \right) $. If%
\begin{equation}
d_{1}\lambda _{1}<F_{0}:=f^{\prime }\left( \alpha \right) \varphi \left(
\alpha \right) ,  \label{2.9}
\end{equation}%
then we may define $i_{\alpha }=i_{\alpha }(\alpha ,\Omega )$ to be the
largest positive integer such that%
\begin{equation*}
d_{1}\lambda _{i}<F_{0}\text{ \ \ for all\ \ }i\leq i_{\alpha }.
\end{equation*}%
Clearly, if (\ref{2.9}) is satisfied, then $1\leq i_{\alpha }<\infty $. In
this case, we define 
\begin{equation}
d=d\left( \alpha ,\Omega \right) =\underset{1\leq i<i_{\alpha }}{\min }%
\widetilde{d}_{i},\text{ \ }\widetilde{d}_{i}=\varphi \left( \alpha \right) 
\frac{\lambda _{i}d_{1}+\varphi \left( \alpha \right) \left( \lambda
g^{\prime }\left( \alpha \right) -f^{\prime }\left( \alpha \right) \right) }{%
\lambda _{i}\left( F_{0}-\lambda _{i}d_{1}\right) }.  \label{2.10}
\end{equation}

\begin{proposition}
Subject to (\ref{2.6}), if either $\lambda _{1}d_{1}\geq F_{0}$ or $\lambda
_{1}d_{1}<F_{0}$ and $0<\frac{d_{2}}{\sigma }<d$, then the constant steady
state $\left( u^{\ast },v^{\ast }\right) $ is locally asymptotically stable.
Otherwise, if \ $\lambda _{1}d_{1}<F_{0}$ and $d<\frac{d_{2}}{\sigma }$,
then $\left( u^{\ast },v^{\ast }\right) $ is locally asymptotically unstable.
\end{proposition}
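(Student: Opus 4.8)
The plan is to establish the claimed dichotomy via the principle of linearized stability for semilinear parabolic systems. Writing $w=(u-u^{\ast },v-v^{\ast })^{T}$ and linearizing (\ref{1.1}) about the constant state $(u^{\ast },v^{\ast })=(\alpha ,g(\alpha ))$, $w$ obeys $\partial _{t}w=D\Delta w+J(u^{\ast },v^{\ast })w$ up to higher--order terms, where $D=\operatorname{diag}(d_{1},d_{2})$ and $J(u^{\ast },v^{\ast })$ is the Jacobian computed in Proposition~\ref{PropEquilibrium}. Expanding $w$ in the orthonormal Neumann eigenbasis $\{\Phi _{ij}\}$ and using $-\Delta \Phi _{ij}=\lambda _{i}\Phi _{ij}$, the component on the eigenspace of $\lambda _{i}$ evolves under the $2\times 2$ matrix $A_{i}:=J(u^{\ast },v^{\ast })-\lambda _{i}D$. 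Hence $(u^{\ast },v^{\ast })$ is linearly asymptotically stable exactly when every $A_{i}$, $i\geq 0$, has spectrum in the open left half--plane, that is $\operatorname{tr}A_{i}<0$ and $\det A_{i}>0$ for all $i$; and since $\lambda _{i}\to \infty $ forces $\operatorname{tr}A_{i}\to -\infty $ and $\det A_{i}=d_{1}d_{2}\lambda _{i}^{2}+O(\lambda _{i})\to +\infty $, only finitely many indices need to be examined.

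First I would dispose of the trace. From $J(u^{\ast },v^{\ast })$ one gets $\operatorname{tr}A_{i}=(f^{\prime }(\alpha )-\sigma )\varphi (\alpha )-(d_{1}+d_{2})\lambda _{i}$, which is strictly negative for every $i\geq 0$ because $f^{\prime }(\alpha )<\sigma $ by (\ref{2.6}) and $d_{1},d_{2}>0$, $\lambda _{i}\geq 0$. So stability is decided entirely by the sign of $\det A_{i}$. Next I would compute $\det A_{i}=-(F_{0}-d_{1}\lambda _{i})(\sigma \varphi (\alpha )+d_{2}\lambda _{i})+\lambda \sigma g^{\prime }(\alpha )\varphi ^{2}(\alpha )$, with $F_{0}=f^{\prime }(\alpha )\varphi (\alpha )$, and treat three ranges of $i$. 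For $i=0$ this equals $\det J(u^{\ast },v^{\ast })=\sigma \varphi ^{2}(\alpha )(\lambda g^{\prime }(\alpha )-f^{\prime }(\alpha ))>0$ by (\ref{2.6}). For $i\geq 1$ with $d_{1}\lambda _{i}\geq F_{0}$, the first term is a product of a non--positive and a negative factor, hence non--negative, so $\det A_{i}>0$ automatically; in particular, if $\lambda _{1}d_{1}\geq F_{0}$ this covers all $i\geq 1$ and stability holds with no restriction on $d_{2}/\sigma $. Finally, if $\lambda _{1}d_{1}<F_{0}$, the only indices that can violate positivity are those with $1\leq i$ and $d_{1}\lambda _{i}<F_{0}$, i.e. $1\leq i\leq i_{\alpha }$; for these, dividing $\det A_{i}>0$ by $\sigma \lambda _{i}(F_{0}-d_{1}\lambda _{i})>0$ and simplifying with $F_{0}=f^{\prime }(\alpha )\varphi (\alpha )$ shows $\det A_{i}>0$ is equivalent to $d_{2}/\sigma <\widetilde{d}_{i}$, with $\widetilde{d}_{i}$ exactly the quantity in (\ref{2.10}). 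Hence $\det A_{i}>0$ for all $i$ iff $d_{2}/\sigma <d$ as defined in (\ref{2.10}), giving the asserted stability; and when $d_{2}/\sigma >d$, taking the minimizing index yields $\det A_{i}<0$, so $A_{i}$ has a real positive eigenvalue and the state is linearly unstable.

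The main obstacle, apart from this case bookkeeping, is the passage from the spectral picture to genuine nonlinear (in)stability of the PDE. For that I would invoke the standard linearized--stability theory for semilinear parabolic systems (analytic semigroups on $L^{p}(\Omega )$ or on fractional power spaces, as developed by Henry and others): the reaction field $(F,G)$ is $C^{1}$ near $(u^{\ast },v^{\ast })$, and by the computations above the spectrum of the linearization $-D(-\Delta )+J(u^{\ast },v^{\ast })$ lies in $\{\operatorname{Re}z\leq -\kappa \}$ for some $\kappa >0$ in the stable regime (uniformity again because $\lambda _{i}\to \infty $), while it contains a point with positive real part in the unstable regime; local asymptotic stability, respectively instability, then follows. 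A secondary point deserving care is the index set in the definition of $d$: the determinant condition is nontrivial precisely for the modes $1\leq i$ with $d_{1}\lambda _{i}<F_{0}$, so the minimum in (\ref{2.10}) should be understood over exactly those indices; the borderline value $d_{2}/\sigma =d$ is not claimed by the proposition.
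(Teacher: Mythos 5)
Your proposal is correct and follows essentially the same route as the paper: expansion in the Neumann eigenbasis reduces the linearized operator to the mode matrices $A_{i}=J(u^{\ast},v^{\ast})-\lambda_{i}D$, whose characteristic polynomials $\xi^{2}+p_{i}\xi+Q_{i}$ in the paper have $p_{i}=-\operatorname{tr}A_{i}>0$ by (\ref{2.6}) and $Q_{i}=\det A_{i}$, and the same three-case sign analysis of $Q_{i}$ against $\widetilde{d}_{i}$ yields the dichotomy. Your added remarks on passing from the spectral picture to nonlinear (in)stability and on the index range in (\ref{2.10}) are sound refinements of what the paper handles by citing \citep{Casten1977}.
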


\begin{proof}
First, let us define the operator%
\begin{align*}
L& =\left( 
\begin{array}{cc}
d_{1}\Delta +f^{\prime }\left( \alpha \right) \varphi \left( \alpha \right)
& -\lambda \varphi \left( \alpha \right) \\ 
\sigma g^{\prime }\left( \alpha \right) \varphi \left( \alpha \right) & 
d_{2}\Delta -\sigma \varphi \left( \alpha \right)%
\end{array}%
\right) \\
& =\left( 
\begin{array}{cc}
d_{1}\Delta +F_{0} & F_{1} \\ 
\sigma G_{0} & d_{2}\Delta +\sigma G_{1}%
\end{array}%
\right) .
\end{align*}%
The steady state solution $\left( u^{\ast },v^{\ast }\right) $ is locally
assymptotically stable if all the eigenvalues of $L$ have negative real
parts, see for instance \citep{Casten1977}. On the contrary, if some
eigenvalues have positive real parts, then the steady state is locally
asymptotically unstable. We have%
\begin{equation*}
L\left( \phi \left( x\right) ,\psi \left( x\right) \right) ^{t}=\xi \left(
\phi \left( x\right) ,\psi \left( x\right) \right) ^{t},
\end{equation*}%
where $\left( \phi \left( x\right) ,\psi \left( x\right) \right) $ is an
eigenfunction of $L$ corresponding to an eigenvalue $\xi $; this can be
rearranged to%
\begin{equation*}
\left( 
\begin{array}{cc}
d_{1}\Delta +F_{0}-\xi & F_{1} \\ 
\sigma G_{0} & d_{2}\Delta +\sigma G_{1}-\xi%
\end{array}%
\right) \left( 
\begin{array}{c}
\phi \\ 
\psi%
\end{array}%
\right) =\left( 
\begin{array}{c}
0 \\ 
0%
\end{array}%
\right) ,
\end{equation*}%
which can be rewritten as%
\begin{equation*}
\sum_{0\leq i\leq \infty ,1\leq j\leq m_{i}}\left( 
\begin{array}{cc}
F_{0}-d_{1}\lambda _{i}-\xi & F_{1} \\ 
\sigma G_{0} & \sigma G_{1}-d_{2}\lambda _{i}-\xi%
\end{array}%
\right) \left( 
\begin{array}{c}
a_{ij} \\ 
b_{ij}%
\end{array}%
\right) \Phi _{ij}=\left( 
\begin{array}{c}
0 \\ 
0%
\end{array}%
\right) ,
\end{equation*}%
where%
\begin{equation*}
\phi =\sum_{0\leq i\leq \infty ,1\leq j\leq m_{i}}a_{ij}\Phi _{ij},
\end{equation*}%
and%
\begin{equation*}
\psi =\sum_{0\leq i\leq \infty ,1\leq j\leq m_{i}}b_{ij}\Phi _{ij}.
\end{equation*}%
Observe that 
\begin{equation*}
\det \left( 
\begin{array}{cc}
F_{0}-d_{1}\lambda _{i}-\xi & F_{1} \\ 
\sigma G_{0} & \sigma G_{1}-d_{2}\lambda _{i}-\xi%
\end{array}%
\right) =\xi ^{2}+p_{i}\xi +Q_{i},
\end{equation*}%
with%
\begin{align*}
p_{i}& =\left( \lambda _{i}d_{1}-\sigma G_{1}-F_{0}+\lambda _{i}d_{2}\right)
\\
& =\left( d_{1}+d_{2}\right) \lambda _{i}+\left[ \sigma  -f^{\prime }\left( \alpha \right) \right] \varphi \left( \alpha
\right) >0,
\end{align*}%
by condition (\ref{2.6}), and%
\begin{align*}
Q_{i}& =\lambda _{i}^{2}d_{1}d_{2}+\sigma F_{0}G_{1}-\lambda
_{i}F_{0}d_{2}-\sigma \lambda _{i}G_{1}d_{1}-\sigma F_{1}G_{0} \\
& =\sigma \left[ \lambda _{i}\frac{d_{2}}{\sigma }\left( \lambda
_{i}d_{1}-F_{0}\right) -G_{1}\lambda _{i}d_{1}+F_{0}G_{1}-F_{1}G_{0}\right]
\\
& =\sigma \left[ \frac{d_{2}}{\sigma }\lambda _{i}\left( \lambda
_{i}d_{1}-F_{0}\right) +\varphi \left( \alpha \right) \left \{ \lambda
_{i}d_{1}+\varphi \left( \alpha \right) \left( \lambda g^{\prime }\left(
\alpha \right) -f^{\prime }\left( \alpha \right) \right) \right \} \right] ;
\end{align*}%
note that $Q_{0}>0$ for $\lambda _{0}=0$. Hence, one may easily observe that 
$\xi $ is an eigenvalue of $L$ iff for some $i\geq 0$,%
\begin{equation*}
\xi ^{2}+p_{i}\xi +Q_{i}=0.
\end{equation*}

We can study the three cases stated in the proposition above separately:

\begin{enumerate}
\item If $\lambda _{1}d_{1}\geq F_{0}$, then $Q_{i}>0$ for $i\geq 1$. The
fact that for $i\geq 0$, both $p_{i}>0$ and $Q_{i}>0$ for $i\geq 0$ implies
that $\text{Re}\xi <0$ for all eigenvalues $\xi $; consequently the steady
state $\left( u^{\ast },v^{\ast }\right) $ is locally asymptotically stable.

\item We consider the case where $\lambda _{1}d_{1}<F_{0}$ and $0<\frac{d_{2}%
}{\sigma }<d$, which leads to%
\begin{equation*}
\lambda _{1}d_{1}<F_{0}\text{ and }0<\frac{d_{2}}{\sigma }<\widetilde{d}_{i},
\end{equation*}%
for $i\in \left[ 1,i_{\alpha }\right] $. It simply follows that $Q_{i}>0$
for $i\in \left[ 1,i_{\alpha }\right] $. Furthermore, if $i\geq i_{\alpha }$%
, then $\lambda _{i}d_{1}\geq F_{0}$ and $Q_{i}>0$; this leads to the local
asymptotic stability of $\left( u^{\ast },v^{\ast }\right) $ again.

\item If $\lambda _{1}d_{1}<F_{0}$ and $d<\frac{d_{2}}{\sigma }$, then we
may assume that the minimum in (\ref{2.10}) is reached by $k\in \left[
1,i_{\alpha }\right] $. Thus,%
\begin{equation}
\frac{d_{2}}{\sigma }>\widetilde{d}_{k},  \label{dCond}
\end{equation}%
which implies $Q_{k}<0$, and consequently the instability of $\left( u^{\ast
},v^{\ast }\right) $ follows.
\end{enumerate}
\end{proof}

\section{Boundedness of Solutions}

In this section, we would like to establish the global existence of
solutions for the system (\ref{1.1}). We start by proving that it has a
unique solution $\left( u\left( x,t\right) ,v\left( x,t\right) \right) $ for
all $x\in \Omega $ and $t>0$, which is bounded by some positive constants
depending on $u_{0}$ and $v_{0}$. In order to establish the boundedness of
solutions, it is assumed that $\varphi \ $is a sublinear function, i.e. the
mapping $(0,\infty )\ni s\rightarrow \frac{\varphi (s)}{s}$ is
non--increasing. In a similar manner to Lemma 1 of \citep{Abdelmalek2017a},
we may establish that the sublinearity of $\varphi $ along with the first
part of (\ref{con1}), $\varphi \left( 0\right) =0$, gives%
\begin{equation}
0<\frac{\varphi \left( u\right) }{u}\leq \varphi ^{\prime }\left( 0\right) .
\label{02.1}
\end{equation}%
Let us also assume that%
\begin{equation*}
f\left( u\right) \varphi \left( u\right) =K-u\Psi \left( u\right) ,
\end{equation*}%
with $K$ being a positive real number and $\Psi \left( u\right) $ a positive
bounded function. Similarly, we assume that%
\begin{equation*}
g\left( u\right) \varphi \left( u\right) =u\Phi \left( u\right) ,
\end{equation*}%
where $\Phi \left( u\right) $ is a positive bounded function. The following
propositions are based on the work of Ni and Tang (2005) \citep{Ni2005} for
the original Lengyel--Epstein system.

\begin{proposition}
\label{Prop1}The system (\ref{1.1}) admits a unique solution $(u,v)$ for all 
$x\in \Omega $ and $t>0$ and there exist two positive constants $C_{1}$ and $%
C_{2}$ such that%
\begin{equation}
C_{1}<u\left( x,t\right) ,v\left( x,t\right) <C_{2}.  \label{02.2}
\end{equation}
\end{proposition}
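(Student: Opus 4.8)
The plan is to marry the standard local well-posedness theory for semilinear parabolic systems with a short chain of scalar comparison estimates that exploit the three structural hypotheses imposed on the reaction: the sublinearity bound \eqref{02.1}, the decomposition $f(u)\varphi(u)=K-u\Psi(u)$ with $K>0$ and $\Psi$ positive and bounded, and $g(u)\varphi(u)=u\Phi(u)$ with $\Phi$ positive and bounded. Throughout I write $\Psi_{*}\le\Psi\le\Psi^{*}$ and $0<\Phi_{*}\le\Phi\le\Phi^{*}$.

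First I would record local existence and uniqueness. Since $f,g,\varphi\in C^{1}$, the nonlinearities $F,G$ are locally Lipschitz in $(u,v)$, so the classical theory (Henry, Amann, Rothe) provides a maximal time $T_{\max}\in(0,\infty]$ and a unique classical solution $(u,v)$ on $[0,T_{\max})\times\overline{\Omega}$, together with the blow-up alternative: if $T_{\max}<\infty$ then $\|u(\cdot,t)\|_{\infty}+\|v(\cdot,t)\|_{\infty}\to\infty$ as $t\to T_{\max}$. It therefore suffices to produce the a priori bound \eqref{02.2} on $[0,T_{\max})$, since the upper half of it already forces $T_{\max}=\infty$. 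For positivity I would note that the reaction is quasi-positive: $F(0,v)=\lim_{u\to0^{+}}f(u)\varphi(u)\ge0$ (exactly as in the proof of Proposition~\ref{PropAttract}) and $G(u,0)=\sigma u\Phi(u)\ge0$; hence the parabolic maximum principle keeps the nonnegative cone invariant, and the strong maximum principle upgrades this to $u,v>0$ for $t>0$.

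The core of the proof is then a carefully ordered sequence of four comparison estimates. (i) Using $v\ge0$, $\varphi\ge0$ and $\Psi\ge\Psi_{*}>0$, the first equation obeys $u_{t}-d_{1}\Delta u=K-u\Psi(u)-\lambda v\varphi(u)\le K-\Psi_{*}u$, so comparison with $y'=K-\Psi_{*}y$ gives the unconditional bound $u\le \overline{C}:=\max\{\|u_{0}\|_{\infty},K/\Psi_{*}\}$. (ii) Writing $G(u,v)=\sigma\varphi(u)\bigl(g(u)-v\bigr)$ and setting $G^{*}:=\max_{0\le s\le\overline{C}}g(s)$, we have $G(u,v)\le0$ whenever $v\ge G^{*}$ (since $\varphi\ge0$), so the maximum principle yields $v\le \overline{C}':=\max\{\|v_{0}\|_{\infty},G^{*}\}$; this gives the upper bound in \eqref{02.2} with $C_{2}:=1+\max\{\overline{C},\overline{C}'\}$. (iii) Now the sublinearity bound \eqref{02.1} gives $\lambda v\varphi(u)\le\lambda\varphi'(0)C_{2}\,u$, hence $u_{t}-d_{1}\Delta u\ge K-\bigl(\Psi^{*}+\lambda\varphi'(0)C_{2}\bigr)u$, and comparison with the corresponding scalar ODE produces a positive lower bound $u\ge m_{1}>0$. (iv) On the strip $m_{1}\le u\le C_{2}$ one has $u\Phi(u)\ge m_{1}\Phi_{*}>0$ and $\varphi(u)\le\varphi^{*}:=\max_{0\le s\le C_{2}}\varphi(s)$, so $v_{t}-d_{2}\Delta v=\sigma u\Phi(u)-\sigma v\varphi(u)\ge\sigma m_{1}\Phi_{*}-\sigma\varphi^{*}v$, and one last comparison gives $v\ge m_{2}>0$. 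Taking $C_{1}:=\tfrac12\min\{m_{1},m_{2}\}$ completes the argument.

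The main obstacle is the apparent circularity of steps (iii) and (iv): the lower bound for $u$ seems to require an upper bound for $v$, while the lower bound for $v$ requires a positive lower bound for $u$. It is broken precisely because step (i) is unconditional and feeds step (ii), which feeds step (iii), which feeds step (iv); and because the algebraic form of $F$ lets the cross term $\lambda v\varphi(u)$ be absorbed into a contribution linear in $u$ — this is exactly where the sublinearity $\varphi(u)\le\varphi'(0)u$ is indispensable. A secondary, purely technical point is that if $u_{0}$ or $v_{0}$ vanishes on part of $\overline{\Omega}$, the constants $m_{1},m_{2}$ are uniform only for $t\ge\tau>0$ (and then depend on $\tau$), with positivity on $(0,\tau)$ supplied by the strong maximum principle; this is the natural reading of the strict inequalities in \eqref{02.2}.
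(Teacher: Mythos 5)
Your argument is correct and is essentially the paper's own proof in different packaging: the paper casts the four bounds as an invariant rectangle $R=(u_{1},u_{2})\times(v_{1},v_{2})$ in the sense of Weinberger, checking that $(F,G)$ points inward on each side, and the four inequalities used there --- the decompositions $f\varphi=K-u\Psi$ and $g\varphi=u\Phi$ together with the sublinearity bound $\varphi(u)\le\varphi'(0)u$ to absorb the cross term $\lambda v\varphi(u)$ --- are exactly your steps (i)--(iv). Your sequential ordering (upper $u$, then upper $v$, then lower $u$, then lower $v$) is precisely the dependency chain already implicit in the paper's choice of the constants (\ref{02.3})--(\ref{02.6}), so no further comparison is needed.
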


\begin{proof}
Since the local existence and uniqueness of solutions are classical for the
proposed system, see \citep{Friedman1964}, it suffices to establish the
global existence by proving the boundedness of the solution. To this aim, we
will use the invariant regions theory as proposed in \citep{Weinberger1975}.
We take a certain rectangular region of the form%
\begin{equation*}
R=\left( u_{1},u_{2}\right) \times \left( v_{1},v_{2}\right) ,
\end{equation*}%
and study the behavior of the vector field along its four boundaries
separately.

\begin{itemize}
\item On the left boundary of $R$, we have $u=u_{1}$ and $v_{1}\leq v\leq
v_{2}$. Hence,%
\begin{eqnarray*}
F\left( u,v\right) &=&\left( f\left( u\right) -\lambda v\right) \varphi
\left( u\right) \\
&\geq &f\left( u_{1}\right) \varphi \left( u_{1}\right) -\lambda
v_{2}\varphi \left( u_{1}\right) \\
&\geq &K-u_{1}\Psi \left( u_{1}\right) -\lambda v_{2}\varphi \left(
u_{1}\right) \\
&\geq &K-u_{1}\left[ \Psi \left( u_{1}\right) +\lambda v_{2}\frac{\varphi
\left( u_{1}\right) }{u_{1}}\right] \\
&\geq &K-u_{1}\left[ \Psi \left( u_{1}\right) +\lambda v_{2}\varphi ^{\prime
}\left( 0\right) v_{2}\right] .
\end{eqnarray*}%
A sufficient condition for $F\left( u,v\right) \geq 0$ can then be
formulated as%
\begin{equation*}
K-u_{1}\left[ \Psi \left( u_{1}\right) +\lambda v_{2}\varphi ^{\prime
}\left( 0\right) v_{2}\right] \geq 0.
\end{equation*}%
Whereupon%
\begin{equation*}
u_{1}\leq \frac{K}{\Psi _{\min }+\lambda v_{2}\varphi ^{\prime }\left(
0\right) }.
\end{equation*}

\item For the right boundary where $u=u_{2}$ and $v_{1}\leq v\leq v_{2}$, we
have%
\begin{eqnarray*}
F\left( u,v\right) &=&f\left( u\right) \varphi \left( u\right) -\lambda
v\varphi \left( u\right) \\
&\leq &f\left( u_{2}\right) \varphi \left( u_{2}\right) \\
&\leq &K-u_{2}\Psi \left( u_{2}\right) .
\end{eqnarray*}%
It suffices that%
\begin{equation*}
K-u_{2}\Psi \left( u_{2}\right) \leq 0,
\end{equation*}%
or simply%
\begin{equation*}
\frac{K}{\Psi _{\max }}\leq u_{2},
\end{equation*}%
to guarantee the inquality $F\left( u,v\right) \leq 0$.
\end{itemize}

These two conditions yield the first part of the invariant region $R$:%
\begin{equation}
u_{1}=\min \left \{ \frac{K}{\Psi _{\min }+\lambda v_{2}\varphi ^{\prime
}\left( 0\right) },\min u_{0}\right \} ,  \label{02.3}
\end{equation}%
and%
\begin{equation}
u_{2}=\max \left \{ \frac{K}{\Psi _{\max }},\max u_{0}\right \} .
\label{02.4}
\end{equation}

\begin{itemize}
\item For the third boundary of $R$ where $v=v_{1}$ and $u_{1}\leq u\leq
u_{2}$,%
\begin{eqnarray*}
G\left( u,v\right) &=&\sigma \left( g\left( u\right) \varphi \left( u\right)
-v_{1}\varphi \left( u\right) \right) \\
&=&\sigma \left( u\Phi \left( u\right) -v_{1}\varphi \left( u\right) \right)
\\
&\geq &\sigma u\left( \Phi \left( u\right) -v_{1}\frac{\varphi \left(
u\right) }{u}\right) \\
&\geq &\sigma u\left( \Phi _{\min }-v_{1}\varphi ^{\prime }\left( 0\right)
\right) .
\end{eqnarray*}%
Hence, to achieve $G\left( u,v\right) \geq 0$, it suffices that%
\begin{equation*}
v_{1}\leq \frac{\Phi _{\min }}{\varphi ^{\prime }\left( 0\right) }.
\end{equation*}

\item For the boundary with $v=v_{2}$ and $u_{1}\leq u\leq u_{2}$,%
\begin{eqnarray*}
G\left( u,v\right) &=&\sigma \left( g\left( u\right) \varphi \left( u\right)
-v_{2}\varphi \left( u\right) \right) \\
&=&\sigma \left( u\Phi \left( u\right) -v_{2}\varphi \left( u\right) \right)
\\
&=&\sigma u\left( \Phi \left( u\right) -v_{2}\frac{\varphi \left( u\right) }{%
u}\right) \\
&\leq &\sigma u\left( \Phi _{\max }-v_{2}\frac{\varphi \left( u_{2}\right) }{%
u_{2}}\right) .
\end{eqnarray*}%
To ensure the negativity of $G\left( u,v\right) $ on this boundary, it is
sufficient to choose $v_{2}$ such that%
\begin{equation*}
v_{2}\geq \frac{u_{2}}{\varphi \left( u_{2}\right) }\Phi _{\max }.
\end{equation*}
\end{itemize}

Combining the conditions for these two boundaries gives us%
\begin{equation}
v_{1}=\min \left \{ \frac{\Phi _{\min }}{\varphi ^{\prime }\left( 0\right) }%
,\min v_{0}\right \} ,  \label{02.5}
\end{equation}%
and%
\begin{equation}
v_{2}=\max \left \{ \frac{u_{2}}{\varphi \left( u_{2}\right) }\Phi _{\max
},\max v_{0}\right \} .  \label{02.6}
\end{equation}

We can now simply define the bounds of the solutions as%
\begin{equation}
C_{1}=\min \left \{ u_{1},v_{1}\right \} >0,  \label{02.7}
\end{equation}%
and%
\begin{equation}
C_{2}=\min \left \{ u_{2},v_{2}\right \} >0.  \label{02.8}
\end{equation}
\end{proof}

\section{Global Asymptotic Stability}

We now pass to the global asymptotic stability for the system (\ref{1.1}).
For the global asymptotic stability of the steady state solution, we
consider the condition%
\begin{equation}
\left( \alpha -u\right) \left[ f\left( u\right) -f\left( \alpha \right) %
\right] >0\text{ \ for }u\in \left( 0,\alpha \right) \cup \left( \alpha
,\delta \right) ,  \label{con6}
\end{equation}%
which is clearly stronger than (\ref{con4}). System (\ref{1.1}) can now be
rewritten as%
\begin{equation*}
\left \{ 
\begin{array}{l}
\frac{\partial u}{\partial t}-d_{1}\Delta u=\left[ \left( f\left( u\right)
-f\left( \alpha \right) \right) -\lambda \left( v-g\left( \alpha \right)
\right) \right] \varphi \left( u\right) \text{ \ \ \ in }\mathbb{R}%
^{+}\times \Omega ,\smallskip \\ 
\frac{\partial v}{\partial t}-d_{2}\Delta v=\sigma \left[ \left( g\left(
u\right) -g\left( \alpha \right) \right) -\left( v-g\left( \alpha \right)
\right) \right] \varphi \left( u\right) \text{ \ \ \ in }\mathbb{R}%
^{+}\times \Omega .%
\end{array}%
\right.
\end{equation*}

As a first step, we start by establishing the conditions for the global
stabiliy of $\left( u^{\ast },v^{\ast }\right) $ as a solution of the
reduced ODE system%
\begin{equation}
\left \{ 
\begin{array}{l}
\frac{\partial u}{\partial t}=F\left( u,v\right) ,\smallskip \\ 
\frac{\partial v}{\partial t}=G\left( u,v\right) .%
\end{array}%
\right.  \label{3.1.0}
\end{equation}

\begin{theorem}
\label{Theo1}If for all $u$\ $\in \left( 0,\delta \right) $,%
\begin{equation}
f^{\prime }\left( u\right) <\sigma ,  \label{3.1.1}
\end{equation}%
then $\left( u^{\ast },v^{\ast }\right) $ is globally asymptotically stable.
\end{theorem}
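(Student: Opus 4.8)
The plan is to build a Lyapunov functional for the ODE system \eqref{3.1.0} and apply LaSalle's invariance principle on the invariant rectangle $\Re=(0,\delta)\times(0,g(\delta))$ provided by Proposition \ref{PropAttract}. A natural candidate, modelled on the Lengyel--Epstein treatments in \citep{Ni2005,Yi2009,Lisena2014}, is
\begin{equation*}
E(u,v)=\int_{\alpha}^{u}\frac{f(s)-f(\alpha)}{\varphi(s)}\,ds+\frac{\lambda}{2\sigma}\left(v-g(\alpha)\right)^{2}.
\end{equation*}
The first integrand is well defined and nonnegative on $(0,\delta)$ because $\varphi>0$ there by \eqref{con5} and, by condition \eqref{con6}, $(\,f(s)-f(\alpha)\,)$ has the sign of $(s-\alpha)^{-1}$-type behaviour so the integrand has the sign of $(s-\alpha)$; hence $E\ge 0$ with equality only at $(u,v)=(\alpha,g(\alpha))=(u^{\ast},v^{\ast})$. (If the singularity of $1/\varphi$ at $0$ makes the integral improper, I would first note that trajectories starting in $\Re$ stay in a compact subset bounded away from $u=0$ by Proposition \ref{Prop1}, so only the behaviour of $E$ on such a compact set matters; alternatively one replaces the weight by $f(s)-f(\alpha)$ directly and carries an extra $\varphi(u)$ factor, which does not change the sign analysis.)

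Next I would differentiate $E$ along solutions of \eqref{3.1.0}. Using $\dot u=\big[(f(u)-f(\alpha))-\lambda(v-g(\alpha))\big]\varphi(u)$ and $\dot v=\sigma\big[(g(u)-g(\alpha))-(v-g(\alpha))\big]\varphi(u)$, the $\varphi(u)$ in the first equation cancels the $1/\varphi(u)$ weight, giving
\begin{align*}
\dot E &= (f(u)-f(\alpha))\big[(f(u)-f(\alpha))-\lambda(v-g(\alpha))\big] \\
&\quad+\lambda(v-g(\alpha))\big[(g(u)-g(\alpha))-(v-g(\alpha))\big]\varphi(u).
\end{align*}
The cross terms $-\lambda(f(u)-f(\alpha))(v-g(\alpha))$ and $+\lambda\varphi(u)(g(u)-g(\alpha))(v-g(\alpha))$ are the obstruction: they do not cancel automatically. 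The standard device is to write $f(u)-f(\alpha)=f'(\xi)(u-\alpha)$ and $g(u)-g(\alpha)=g'(\eta)(u-\alpha)$ by the mean value theorem and compare, or better, to choose the coefficient in $E$ more carefully. I expect the intended functional actually pairs the two equations so that the cross terms combine into a term proportional to $(f(u)-f(\alpha))(v-g(\alpha))$ times $\big(\text{something involving }f'-\sigma\big)$; the role of hypothesis \eqref{3.1.1}, $f'(u)<\sigma$ on $(0,\delta)$, is precisely to make the resulting quadratic form in the variables $X=f(u)-f(\alpha)$ and $Y=v-g(\alpha)$ negative semidefinite. Concretely I would aim to show
\begin{equation*}
\dot E \le -c\left[(f(u)-f(\alpha))^{2}+\varphi(u)(v-g(\alpha))^{2}\right]
\end{equation*}
for some $c>0$, using \eqref{3.1.1} to control the discriminant of the relevant $2\times2$ form (this is the analogue of the trace/determinant computation in Proposition \ref{PropEquilibrium}, now made global).

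Finally, $\dot E\le 0$ on $\Re$ with $\dot E=0$ only where $f(u)=f(\alpha)$ and $v=g(\alpha)$; since $f$ is strictly monotone near $\alpha$ in the sense dictated by \eqref{con6} (indeed \eqref{con6} forces $f(u)\ne f(\alpha)$ for $u\in(0,\alpha)\cup(\alpha,\delta)$), the only point of the set $\{\dot E=0\}$ is $(u^{\ast},v^{\ast})$, so the largest invariant subset is the singleton $\{(u^{\ast},v^{\ast})\}$. By LaSalle's invariance principle every trajectory starting in $\Re$ converges to $(u^{\ast},v^{\ast})$; combined with the local asymptotic stability already available (Proposition \ref{PropEquilibrium} under \eqref{2.3}, or directly from the Lyapunov function), this yields global asymptotic stability. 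The main obstacle is the middle step: getting the sign-definiteness of $\dot E$, i.e.\ identifying the correct weighting constant in $E$ and verifying that \eqref{3.1.1} is exactly what kills the indefinite cross term uniformly on the invariant rectangle.
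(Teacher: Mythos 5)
There is a genuine gap, and the route you chose is not the paper's. The paper proves Theorem \ref{Theo1} by a planar phase--plane argument: writing the vector field of (\ref{3.1.0}) as $\varphi\left(u\right)\cdot \Psi\left(u,v\right)$ with $\Psi\left(u,v\right)=\left(f\left(u\right)-\lambda v,\ \sigma g\left(u\right)-\sigma v\right)$ (equivalently, using $1/\varphi\left(u\right)$ as a Dulac multiplier), computing $\mathrm{div}\,\Psi=f^{\prime}\left(u\right)-\sigma<0$ on $\Re$ from (\ref{3.1.1}), invoking the Bendixson--Dulac criterion to exclude periodic orbits in the invariant rectangle of Proposition \ref{PropAttract}, and concluding by Poincar\'{e}--Bendixson that every trajectory converges to the unique equilibrium. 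This is precisely why the hypothesis of Theorem \ref{Theo1} is a bound on $f^{\prime}\left(u\right)-\sigma$: it \emph{is} the divergence. The Lyapunov/LaSalle route you propose is what the paper reserves for the PDE statement (Proposition \ref{PropLyapunov} and Theorem \ref{TheoGlobal}), where the operative hypothesis is (\ref{con6}); note that (\ref{con6}) is not among the hypotheses of Theorem \ref{Theo1}, so by invoking it you are already assuming more than the statement allows.

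More importantly, your candidate functional fails as written. By (\ref{con6}) (or already (\ref{con4})), $f\left(s\right)-f\left(\alpha\right)$ has the sign of $\alpha-s$, not of $s-\alpha$, so $\int_{\alpha}^{u}\frac{f\left(s\right)-f\left(\alpha\right)}{\varphi\left(s\right)}\,ds\leq 0$ and $E$ is not positive definite. Worse, your own expression for $\dot{E}$ expands to $\left(f\left(u\right)-f\left(\alpha\right)\right)^{2}-\lambda\left(f\left(u\right)-f\left(\alpha\right)\right)\left(v-g\left(\alpha\right)\right)+\lambda\varphi\left(u\right)\left(v-g\left(\alpha\right)\right)\left(g\left(u\right)-g\left(\alpha\right)\right)-\lambda\varphi\left(u\right)\left(v-g\left(\alpha\right)\right)^{2}$, which on the line $v=v^{\ast}$ reduces to $\left(f\left(u\right)-f\left(\alpha\right)\right)^{2}>0$ for $u\neq\alpha$; no choice of the weighting constant in front of $\left(v-v^{\ast}\right)^{2}$ can repair a term that is already strictly positive when $v-v^{\ast}=0$. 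So the step you defer (``identifying the correct weighting constant'') is not a technical detail but exactly where the argument breaks. The repair is to weight the $u$--integral by $g$ rather than $f$: with $E=\sigma\int_{\alpha}^{u}\left(g\left(s\right)-g\left(\alpha\right)\right)ds+\frac{\lambda}{2}\left(v-v^{\ast}\right)^{2}$ the cross terms cancel exactly and $\dot{E}=\sigma\varphi\left(u\right)\left[\left(g\left(u\right)-g\left(\alpha\right)\right)\left(f\left(u\right)-f\left(\alpha\right)\right)-\lambda\left(v-v^{\ast}\right)^{2}\right]\leq 0$ by (\ref{con2}) and (\ref{con6}) --- but that is the proof of Proposition \ref{PropLyapunov}/Theorem \ref{TheoGlobal} under (\ref{con6}), not a proof of Theorem \ref{Theo1} under (\ref{3.1.1}) alone. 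To prove Theorem \ref{Theo1} as stated, follow the Bendixson--Dulac and Poincar\'{e}--Bendixson argument.
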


\begin{proof}
System (\ref{3.1.0}) may be written as%
\begin{equation*}
\left \{ 
\begin{array}{l}
u_{t}=\varphi \left( u\right) \left[ f\left( u\right) -\lambda v\right] , \\ 
v_{t}=\sigma \varphi \left( u\right) \left( g\left( u\right) -v\right) .%
\end{array}%
\right.
\end{equation*}

Let us consider the vector field%
\begin{equation*}
\Psi \left( u,v\right) =\left( f\left( u\right) -\lambda v,\sigma g\left(
u\right) -\sigma v\right) ,
\end{equation*}

along with its divergence%
\begin{align*}
{\text{div}}\Psi \left( u,v\right) & =\frac{\partial }{\partial u}\left(
f\left( u\right) -\lambda v\right) +\frac{\partial }{\partial v}\left(
\sigma g\left( u\right) -\sigma v\right) \\
& =f^{\prime }\left( u\right) -\sigma .
\end{align*}%
Let us also consider the open rectangle $\Re $ with closure $\overline{\Re }$%
. We aim to show that%
\begin{equation}
\min_{\left( u,v\right) \in \overline{\Re }}\left( {\text{div}}\Psi \left(
u,v\right) \right) <0.  \label{3.11}
\end{equation}%
By combining (\ref{3.1.1}) and (\ref{3.11}), it becomes clear that%
\begin{equation*}
{\text{div}}\Psi \left( u,v\right) <0\text{ in }\Re .
\end{equation*}%
Therefore, making use of the classical Bendixson--Dulac criterion \citep%
{Burton1985}, system (\ref{3.1.0}) does not admit any periodic solutions in $%
\Re $. It follows from the Poincar\'{e}--Bendixson theorem \citep{Burton1985}
that for any solution $\left( u\left( t\right) ,v\left( t\right) \right) $
to (\ref{3.1.0}), the equality%
\begin{equation*}
\lim_{t\rightarrow \infty }\left \vert u\left( t\right) -u^{\ast }\right
\vert =0=\lim_{t\rightarrow \infty }\left \vert v\left( t\right) -v^{\ast
}\right \vert
\end{equation*}%
holds. This concludes the proof.
\end{proof}

\begin{theorem}
\label{TheoGlobal}If condition (\ref{con6}) is satisfied, then\ for any
solution $\left( u,v\right) $ to (\ref{1.1}), we get 
\begin{equation}
\lim_{t\rightarrow \infty }\left \Vert u\left( .,t\right) -u^{\ast }\right
\Vert _{L^{2}\left( \Omega \right) }=\lim_{t\rightarrow \infty }\left \Vert
v\left( .,t\right) -v^{\ast }\right \Vert _{L^{2}\left( \Omega \right) }=0.
\label{4.1}
\end{equation}
\end{theorem}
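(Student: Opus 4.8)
The plan is to construct a Lyapunov functional on $L^2(\Omega)$ adapted to the invariant rectangle $\Re$ and show it is strictly decreasing along trajectories, then invoke a LaSalle-type argument. Following the Lengyel--Epstein template of \citep{Yi2009,Lisena2014}, I would try the candidate
\begin{equation*}
E(t)=\int_{\Omega}\int_{u^{\ast}}^{u}\frac{f(s)-f(\alpha)}{\varphi(s)}\,ds\,dx+\frac{\lambda}{2\sigma}\int_{\Omega}\bigl(v-v^{\ast}\bigr)^{2}\,dx,
\end{equation*}
where the inner integrand is well defined and nonnegative on $\overline{\Re}$ thanks to condition~(\ref{con6}) together with the positivity of $\varphi$ on $(0,\delta)$ (Proposition~\ref{Prop1} keeps the orbit bounded away from $0$ and $\delta$, so $1/\varphi$ is integrable along it). Note $E(t)\ge 0$ with equality iff $(u,v)\equiv(u^{\ast},v^{\ast})$.

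Next I would differentiate $E$ along solutions of the rewritten system~(\ref{1.1}). The first term gives $\int_{\Omega}\frac{f(u)-f(\alpha)}{\varphi(u)}u_t\,dx$; substituting $u_t=d_1\Delta u+[(f(u)-f(\alpha))-\lambda(v-g(\alpha))]\varphi(u)$, integrating the Laplacian term by parts (the Neumann condition~(\ref{1.3}) kills the boundary term and leaves $-d_1\int_{\Omega}\bigl(\tfrac{f(u)-f(\alpha)}{\varphi(u)}\bigr)'|\nabla u|^2$, which is $\le 0$ once one checks the sign of the derivative — this is where conditions~(\ref{con2}) and~(\ref{con6}) and sublinearity of $\varphi$ enter), and adding the reaction contribution of the second term after substituting $v_t=d_2\Delta v+\sigma[(g(u)-g(\alpha))-(v-v^{\ast})]\varphi(u)$ and integrating its Laplacian by parts as well. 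The cross terms proportional to $(f(u)-f(\alpha))(v-v^{\ast})$ are designed to cancel by the choice of the constant $\lambda/\sigma$, leaving, up to nonpositive gradient terms,
\begin{equation*}
\frac{dE}{dt}\le -\int_{\Omega}\bigl(f(u)-f(\alpha)\bigr)^{2}\frac{dx}{\text{(something positive)}}-\lambda\int_{\Omega}\varphi(u)\,(v-v^{\ast})^{2}\,dx\le 0,
\end{equation*}
with equality only at the steady state; I would also need to handle a residual term of the form $\int_{\Omega}(f(u)-f(\alpha))(g(u)-g(\alpha))\varphi(u)\,dx$, whose sign is controlled by~(\ref{con2}) and~(\ref{con6}) since $(f(u)-f(\alpha))$ and $(g(u)-g(\alpha))$ both change sign with $(\alpha-u)$.

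From $E\ge 0$ and $E$ nonincreasing, $E(t)$ converges and $\int_0^{\infty}\bigl(-\tfrac{dE}{dt}\bigr)\,dt<\infty$; combined with the uniform bounds and parabolic smoothing estimates of Proposition~\ref{Prop1} (which give enough compactness/equicontinuity of the orbit in an appropriate norm), a standard argument forces $\|u(\cdot,t)-u^{\ast}\|_{L^2(\Omega)}+\|v(\cdot,t)-v^{\ast}\|_{L^2(\Omega)}\to 0$, giving~(\ref{4.1}). The main obstacle I anticipate is verifying that the gradient terms genuinely have the right sign: the coefficient $\bigl(\tfrac{f(s)-f(\alpha)}{\varphi(s)}\bigr)'$ need not be signed from~(\ref{con6}) alone, so either the sublinearity hypothesis on $\varphi$ must be used decisively here, or the functional must be modified (e.g.\ a different weight, or an extra multiple of $\tfrac12\int(u-u^{\ast})^2$) so that the diffusion contribution is manifestly dissipative; reconciling this with the cancellation of the cross terms is the delicate balancing act of the proof.
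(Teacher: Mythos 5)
Your overall strategy (a Lyapunov functional of the form $\int_{\Omega }W\left( u\right) dx+c\int_{\Omega }\left( v-v^{\ast }\right) ^{2}dx$ followed by a LaSalle argument) is the same as the paper's, but the weight you chose for the $u$-component is wrong, and it breaks the argument in three places, not only the one you flagged. First, with $W\left( u\right) =\int_{u^{\ast }}^{u}\frac{f\left( s\right) -f\left( \alpha \right) }{\varphi \left( s\right) }ds$, condition (\ref{con6}) says that $f\left( s\right) -f\left( \alpha \right) $ has the sign of $\alpha -s$, so the integrand is positive for $s<\alpha $ and negative for $s>\alpha $; hence $W\left( u\right) \leq 0$ with equality only at $u=\alpha $ --- the opposite of what you claim --- and your $E$ is not positive definite. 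Second, the cross terms do not cancel: the reaction part of your first term contributes $\int_{\Omega }\left( f\left( u\right) -f\left( \alpha \right) \right) ^{2}dx-\lambda \int_{\Omega }\left( f\left( u\right) -f\left( \alpha \right) \right) \left( v-v^{\ast }\right) dx$, while the second term contributes $+\lambda \int_{\Omega }\varphi \left( u\right) \left( g\left( u\right) -g\left( \alpha \right) \right) \left( v-v^{\ast }\right) dx-\lambda \int_{\Omega }\varphi \left( u\right) \left( v-v^{\ast }\right) ^{2}dx$; the two mixed terms are different functions of $u$ and no choice of the constant multiplying $\int \left( v-v^{\ast }\right) ^{2}$ makes them cancel. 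Worse, the surviving term $\int_{\Omega }\left( f\left( u\right) -f\left( \alpha \right) \right) ^{2}dx\geq 0$ pushes $dE/dt$ in the wrong direction. Third, as you yourself note, $\bigl( \frac{f\left( s\right) -f\left( \alpha \right) }{\varphi \left( s\right) }\bigr) ^{\prime }$ is not signed under the stated hypotheses, so the diffusion contribution cannot be shown nonpositive.

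The paper resolves all three issues at once by taking the weight from the $g$-nonlinearity rather than the $f$-nonlinearity: $V\left( t\right) =\int_{\Omega }\left[ \sigma H\left( u\right) +\frac{\lambda }{2}\left( v-v^{\ast }\right) ^{2}\right] dx$ with $H\left( u\right) =\int_{\alpha }^{u}\left( g\left( r\right) -g\left( \alpha \right) \right) dr$. Then (i) $H\geq 0$ because $g\left( r\right) -g\left( \alpha \right) $ has the sign of $r-\alpha $ by (\ref{con2}); (ii) the diffusion part equals $-\sigma d_{1}\int_{\Omega }g^{\prime }\left( u\right) \left\vert \nabla u\right\vert ^{2}dx-\lambda d_{2}\int_{\Omega }\left\vert \nabla v\right\vert ^{2}dx\leq 0$, again by (\ref{con2}); and (iii) the cross terms $\mp \sigma \lambda \int_{\Omega }\varphi \left( u\right) \left( g\left( u\right) -g\left( \alpha \right) \right) \left( v-v^{\ast }\right) dx$ cancel exactly, leaving $J=\sigma \int_{\Omega }\varphi \left( u\right) \left[ \left( g\left( u\right) -g\left( \alpha \right) \right) \left( f\left( u\right) -f\left( \alpha \right) \right) -\lambda \left( v-v^{\ast }\right) ^{2}\right] dx\leq 0$, the product in the first term being nonpositive by (\ref{con6}) combined with $g^{\prime }\geq 0$. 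If you want to salvage your write-up, replace $\frac{f\left( s\right) -f\left( \alpha \right) }{\varphi \left( s\right) }$ by $g\left( s\right) -g\left( \alpha \right) $ (suitably scaled); condition (\ref{con6}) is then used exactly once, to fix the sign of $\left( g\left( u\right) -g\left( \alpha \right) \right) \left( f\left( u\right) -f\left( \alpha \right) \right) $, and no sublinearity of $\varphi $ is needed in this step.
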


\begin{lemma}
\label{Lemma1}If $u\in \left( 0,\delta \right) $, then there exists a
constant $\gamma $ between $u$ and $\alpha $ such that%
\begin{equation*}
g\left( u\right) -g\left( \alpha \right) =\left( u-\alpha \right) g^{\prime
}\left( \gamma \right) .
\end{equation*}
\end{lemma}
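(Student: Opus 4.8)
The plan is to obtain this as a direct application of the Mean Value Theorem. First I would recall from the standing hypotheses that $g$ is continuously differentiable on $\mathbb{R}^{+}$ and that $\alpha \in (0,\delta)$, so both $u$ and $\alpha$ lie in $(0,\delta)\subset \mathbb{R}^{+}$. Consequently, whichever of $u,\alpha$ is smaller, $g$ is continuous on the closed interval with endpoints $u$ and $\alpha$ and differentiable on its interior; these are exactly the hypotheses needed to invoke the classical Mean Value Theorem.

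Next I would dispose of the degenerate case $u=\alpha$: then $g(u)-g(\alpha)=0=(u-\alpha)g'(\gamma)$ holds trivially for $\gamma=\alpha$, which is vacuously ``between $u$ and $\alpha$''. Assuming therefore $u\neq\alpha$, applying the Mean Value Theorem to $g$ on the interval $[\min\{u,\alpha\},\max\{u,\alpha\}]$ yields a point $\gamma$ strictly between $u$ and $\alpha$ with
\begin{equation*}
g'(\gamma)=\frac{g(u)-g(\alpha)}{u-\alpha},
\end{equation*}
which rearranges to the claimed identity $g(u)-g(\alpha)=(u-\alpha)g'(\gamma)$.

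There is essentially no obstacle here; the only points requiring care are verifying that the relevant interval is contained in the domain on which $g$ is $C^{1}$ (so that the hypotheses of the Mean Value Theorem are met) and handling the trivial case $u=\alpha$ separately so that the phrase ``between $u$ and $\alpha$'' is interpreted correctly. This lemma is presumably stated here only to be combined later with condition (\ref{con2}), $g'(u)\geq 0$, so as to control the sign of $g(u)-g(\alpha)$ relative to that of $u-\alpha$ in the forthcoming proof of Theorem \ref{TheoGlobal}.
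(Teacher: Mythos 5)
Your proof is correct: the lemma is precisely the Mean Value Theorem applied to $g$ on the interval with endpoints $u$ and $\alpha$, using that $g$ is $C^{1}$ on $\mathbb{R}^{+}$ and that both points lie in $(0,\delta)$. The paper in fact states this lemma without proof, treating it as an immediate consequence of the Mean Value Theorem, so your argument supplies exactly the intended (and only natural) justification, with the added care of handling the degenerate case $u=\alpha$.
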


\begin{lemma}
Consider the function $H$ defined as%
\begin{equation}
H\left( u\right) =\int_{\alpha }^{u}\left( g\left( r\right) -g\left( \alpha
\right) \right) dr  \label{4.2}
\end{equation}%
It follows that%
\begin{equation*}
H\left( u\right) \geq 0,\text{ \ and }\frac{d}{du}H\left( u\right) =g\left(
u\right) -g\left( \alpha \right) .
\end{equation*}
\end{lemma}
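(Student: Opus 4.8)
The plan is to treat the two assertions separately: the derivative formula is an immediate consequence of the Fundamental Theorem of Calculus, while the nonnegativity follows from the monotonicity of $g$.

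First I would establish $\frac{d}{du}H(u) = g(u) - g(\alpha)$. Since $g$ is continuously differentiable on $\mathbb{R}^{+}$ by hypothesis, the integrand $r \mapsto g(r) - g(\alpha)$ appearing in (\ref{4.2}) is continuous, so $H$ is a differentiable function of its upper limit and differentiating yields the claim at once. I would also record the base value $H(\alpha) = 0$, which is immediate since the integral degenerates when the two limits coincide.

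For the nonnegativity, the decisive point is that condition (\ref{con2}), namely $g'(u) \geq 0$, makes $g$ non-decreasing on $(0,\delta)$. Hence the integrand $g(r) - g(\alpha)$ carries the same sign as $r - \alpha$ --- this is precisely what Lemma \ref{Lemma1} encodes through $g(r) - g(\alpha) = (r-\alpha)g'(\gamma)$ with $g'(\gamma) \geq 0$. I would then split into cases: for $u > \alpha$ the integrand is nonnegative throughout $[\alpha,u]$, so $H(u) \geq 0$; for $u < \alpha$ the integrand is nonpositive on $[u,\alpha]$, and reversing the orientation of the integral flips its sign, again giving $H(u) \geq 0$.

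A slicker route, which I would likely present instead, is to read the sign of $H'$ directly off the derivative formula: since $H'(u) = g(u) - g(\alpha)$ is nonpositive on $(0,\alpha)$ and nonnegative on $(\alpha,\delta)$, the function $H$ decreases up to $\alpha$ and increases afterwards, so $\alpha$ is a global minimizer and $H(u) \geq H(\alpha) = 0$ on all of $(0,\delta)$. I do not anticipate any real obstacle here: the argument rests on nothing beyond elementary calculus together with the monotonicity furnished by (\ref{con2}), so the only care needed is the bookkeeping of signs in the case $u < \alpha$.
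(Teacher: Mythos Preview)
Your proposal is correct and follows essentially the same line as the paper: both arguments rest on the monotonicity of $g$ from (\ref{con2}) to ensure the integrand $g(r)-g(\alpha)$ has the sign of $r-\alpha$, with the paper invoking Lemma~\ref{Lemma1} explicitly to obtain the quantitative lower bound $H(u)\geq \tfrac{1}{2}\inf g'(\gamma)\,(u-\alpha)^{2}$, while your ``slicker route'' via the minimizer at $\alpha$ reaches $H(u)\geq 0$ a touch more directly. Either version suffices for the lemma as stated.
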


\begin{proof}
As a result of using (\ref{Lemma1}), there exists a $\gamma \left( r\right) $
in the interval $\left( \min \left \{ r,\alpha \right \} ,\max \left \{
r,\alpha \right \} \right) $ with $r$ lying between $u$\ and\ $\alpha $\
such that%
\begin{equation*}
H\left( u\left( x,t\right) \right) =\int_{\alpha }^{u}\left( r-\alpha
\right) g^{\prime }\left( \gamma \right) dr,
\end{equation*}%
where $\psi \left( \gamma \right) \geq 0$, leading to%
\begin{equation*}
\inf_{r\in \left( \min \left \{ u,\alpha \right \} ,\max \left \{ u,\alpha
\right \} \right) }g^{\prime }\left( \gamma \right) \int_{\alpha }^{u}\left(
r-\alpha \right) dr\leq H\left( u\left( x,t\right) \right) .
\end{equation*}%
Whereupon%
\begin{equation*}
0\leq \inf_{r}g^{\prime }\left( \gamma \right) \frac{1}{2}\left( u-\alpha
\right) ^{2}\leq H\left( u\left( x,t\right) \right) ,
\end{equation*}%
which shows that $H\left( u\right) \geq 0$.
\end{proof}

\begin{proposition}
\label{PropLyapunov}Let $\left( u\left( t,.\right) ,v\left( t,.\right)
\right) $ be a solution of (\ref{1.1})-(\ref{1.3}) and let%
\begin{equation}
V\left( t\right) =\int_{\Omega }E\left( u\left( x,t\right) ,v\left(
x,t\right) \right) dx,  \label{4.5}
\end{equation}%
where%
\begin{equation}
E\left( u,v\right) =\sigma H\left( u\right) +\frac{\lambda }{2}\left(
v-v^{\ast }\right) ^{2},  \label{4.6}
\end{equation}%
then subject to (\ref{con6}),\ $V\left( t\right) $\ is a Lyapunov functional.
\end{proposition}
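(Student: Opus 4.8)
The plan is to differentiate $V$ along trajectories of (\ref{1.1}), integrate the second-order terms by parts, and then read off the sign of everything that remains. By Proposition \ref{PropAttract} and Proposition \ref{Prop1} the solution $(u(\cdot,t),v(\cdot,t))$ stays in the bounded invariant rectangle $\Re=(0,\delta)\times(0,g(\delta))$ and is, by parabolic regularity, smooth enough in $x$ and $t$ to differentiate $V$ under the integral sign. Using $\tfrac{d}{du}H(u)=g(u)-g(\alpha)$ from the preceding lemma and $v^{\ast}=g(\alpha)$, one obtains
\begin{equation*}
V^{\prime }(t)=\int_{\Omega }\Big[\sigma \big(g(u)-g(\alpha )\big)\,u_{t}+\lambda \big(v-g(\alpha )\big)\,v_{t}\Big]\,dx,
\end{equation*}
into which I would substitute the form of the system displayed at the start of Section 5 (i.e.\ $u_{t}=d_{1}\Delta u+[(f(u)-f(\alpha))-\lambda (v-g(\alpha))]\varphi (u)$ and the analogous equation for $v_{t}$).

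For the diffusion contributions, write $\nabla \big(g(u)-g(\alpha )\big)=g^{\prime }(u)\nabla u$, apply Green's formula, and use the Neumann conditions (\ref{1.3}) to kill the boundary integrals, giving
\begin{equation*}
\sigma d_{1}\int_{\Omega }\big(g(u)-g(\alpha )\big)\Delta u\,dx=-\sigma d_{1}\int_{\Omega }g^{\prime }(u)\,|\nabla u|^{2}\,dx\le 0
\end{equation*}
by (\ref{con2}), and likewise $\lambda d_{2}\int_{\Omega }(v-g(\alpha ))\Delta v\,dx=-\lambda d_{2}\int_{\Omega }|\nabla v|^{2}\,dx\le 0$. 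For the reaction contributions, set $A=g(u)-g(\alpha )$, $B=v-g(\alpha )$, $C=f(u)-f(\alpha )$; the reaction integrand collapses after the cross terms cancel:
\begin{equation*}
\sigma \varphi (u)\Big[A\,(C-\lambda B)+\lambda B\,(A-B)\Big]=\sigma \varphi (u)\big[AC-\lambda B^{2}\big].
\end{equation*}
Here $\varphi (u)>0$ on $(0,\delta )$ by (\ref{con5}), so $-\lambda \sigma \varphi (u)B^{2}\le 0$; and by Lemma \ref{Lemma1} there is $\gamma $ between $u$ and $\alpha $ with $A=(u-\alpha )g^{\prime }(\gamma )$, whence $AC=g^{\prime }(\gamma )\,(u-\alpha )\big(f(u)-f(\alpha )\big)\le 0$ because $g^{\prime }(\gamma )\ge 0$ by (\ref{con2}) and $(u-\alpha )(f(u)-f(\alpha ))\le 0$ by the strengthened condition (\ref{con6}). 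Integrating over $\Omega $, all pieces of $V^{\prime }(t)$ are nonpositive, so $V^{\prime }(t)\le 0$; together with $V(t)\ge 0$ (from $H(u)\ge 0$ in the preceding lemma and $\tfrac{\lambda }{2}(v-v^{\ast })^{2}\ge 0$) this shows that $V$ is a Lyapunov functional.

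The main obstacle is the reaction-term bookkeeping: one must notice that the two $AB$-terms cancel exactly, and then combine Lemma \ref{Lemma1} with the monotonicity of $g$ and with (\ref{con6}) to fix the sign of $\big(g(u)-g(\alpha )\big)\big(f(u)-f(\alpha )\big)$ — this is precisely where the extra hypothesis (\ref{con6}), rather than the weaker (\ref{con4}), is needed. The diffusion estimates and the nonnegativity of $V$ are routine once the boundedness and regularity from Section 4 are invoked to justify the integration by parts.
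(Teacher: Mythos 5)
Your proposal is correct and follows essentially the same route as the paper: differentiate $V$ under the integral, split $V'$ into a diffusion part handled by Green's formula with the Neumann conditions and (\ref{con2}), and a reaction part whose cross terms cancel, leaving $\sigma\varphi(u)[(g(u)-g(\alpha))(f(u)-f(\alpha))-\lambda(v-v^{\ast})^{2}]$, which is nonpositive by Lemma \ref{Lemma1} together with (\ref{con2}) and (\ref{con6}). Your write-up is in fact slightly more careful than the paper's, since you make the cross-term cancellation explicit and note the nonnegativity of $V$.
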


\begin{proof}
First of all,%
\begin{equation*}
V\left( t\right) =\int_{\Omega }\left[ \sigma H\left( u\right) +\frac{%
\lambda }{2}\left( v-v^{\ast }\right) ^{2}\right] dx.
\end{equation*}%
We have%
\begin{align*}
\frac{d}{dt}V\left( t\right) & =\frac{d}{dt}\int_{\Omega }\left[ \sigma
H\left( u\right) +\frac{\lambda }{2}\left( v-v^{\ast }\right) ^{2}\right] dx
\\
& =\int_{\Omega }\sigma \frac{d}{dt}\left[ H\left( u\right) \right] dx+\frac{%
\lambda }{2}\frac{d}{dt}\int_{\Omega }\left( v-v^{\ast }\right) ^{2}dx \\
& =\sigma \int_{\Omega }\left( g\left( u\right) -g\left( u^{\ast }\right)
\right) u_{t}dx+\lambda \int_{\Omega }\left( v-v^{\ast }\right) v_{t}dx \\
& =\sigma \int_{\Omega }\left( g\left( u\right) -g\left( u^{\ast }\right)
\right) \left( d_{1}\Delta u+\varphi \left( u\right) \left[ \left( f\left(
u\right) -f\left( u^{\ast }\right) \right) -\lambda \left( v-v^{\ast
}\right) \right] \right) dx \\
& \  \  \  \  \  \  \  \ +\lambda \int_{\Omega }\left( v-v^{\ast }\right) \left(
d_{2}\Delta u+\sigma \varphi \left( u\right) \left[ \left( g\left( u\right)
-g\left( u^{\ast }\right) \right) -\left( v-v^{\ast }\right) \right] \right)
dx.
\end{align*}%
For reasons that will become clear at the end of this proof, let us set%
\begin{equation*}
I=\sigma d_{1}\int_{\Omega }\left( g\left( u\right) -g\left( u^{\ast
}\right) \right) \Delta u\,dx+\lambda d_{2}\int_{\Omega }\left( v-v^{\ast
}\right) \Delta v\,dx,
\end{equation*}%
and%
\begin{equation*}
J=\int_{\Omega }\sigma \varphi \left( u\right) \left[ \left( g\left(
u\right) -g\left( u^{\ast }\right) \right) \left( f\left( u\right) -f\left(
u^{\ast }\right) \right) -\lambda \left( v-v^{\ast }\right) ^{2}\right] dx.
\end{equation*}

Let us also set%
\begin{equation*}
I=I_{1}+I_{2},
\end{equation*}%
where%
\begin{align*}
I_{1}& =\sigma d_{1}\int_{\Omega }\left( g\left( u\right) -g\left( u^{\ast
}\right) \right) \Delta u\,dx \\
& =-\sigma d_{1}\int_{\Omega }g^{\prime }\left( u\right) \left \vert \nabla
u\right \vert ^{2}\,dx,
\end{align*}%
and%
\begin{align*}
I_{2}& =\lambda d_{2}\int_{\Omega }\left( v-v^{\ast }\right) \Delta v\,dx \\
& =-\lambda d_{2}\int_{\Omega }\left \vert \nabla v\right \vert ^{2}\,dx,
\end{align*}%
in the light of Green's formula ($\dfrac{\partial u}{\partial \nu }=\dfrac{%
\partial v}{\partial \nu }=0$); whereupon%
\begin{equation*}
I=-\sigma d_{1}\int_{\Omega }g^{\prime }\left( u\right) \left \vert \nabla
u\right \vert ^{2}dx-\lambda d_{2}\int_{\Omega }\left \vert \nabla v\right
\vert ^{2}dx.
\end{equation*}

Therefore, under assumption (\ref{con2}) it follows that $I\leq0$.

Now, let us examine%
\begin{equation}
J=\int_{\Omega }\sigma \varphi \left( u\right) \left[ \left( g\left(
u\right) -g\left( u^{\ast }\right) \right) \left( f\left( u\right) -f\left(
u^{\ast }\right) \right) -\lambda \left( v-v^{\ast }\right) ^{2}\right] dx.
\label{4.8}
\end{equation}%
We have%
\begin{equation*}
J=\int_{\Omega }\sigma \varphi \left( u\right) \left[ g^{\prime }\left(
\gamma _{2}\right) \left( u-u^{\ast }\right) \left( f\left( u\right)
-f\left( u^{\ast }\right) \right) -\lambda \left( v-v^{\ast }\right) ^{2}%
\right] dx.
\end{equation*}%
If condition (\ref{con6}) is satisfied, then%
\begin{eqnarray*}
u &\leq &u^{\ast }\implies \left( u-u^{\ast }\right) \left( f\left( u\right)
-f\left( u^{\ast }\right) \right) \leq 0, \\
u &\geq &u^{\ast }\implies \left( u-u^{\ast }\right) \left( f\left( u\right)
-f\left( u^{\ast }\right) \right) \leq 0.
\end{eqnarray*}

It is easy to see that $J\leq 0$, and therefore%
\begin{equation*}
\frac{d}{dt}V\left( t\right) \leq 0.
\end{equation*}%
This concludes the proof of the proposition.
\end{proof}

\begin{proof}
\lbrack Proof of Theorem 1] The positive-definite functional $V\left(
t\right) $ has a non-positive derivative. Moreover, if $\left( u\left(
x,t\right) ,v\left( x,t\right) \right) \in \Re $ is a solution of (\ref{1.1}%
), for which $\frac{d}{dt}V\left( t\right) =0$, it follows necessarily that $%
\left \vert \nabla u\right \vert ^{2}=\left \vert \nabla v\right \vert ^{2}=0
$; that is $u$ and $v$ are spatially homogeneous. Hence, $\left( u,v\right) $
satisfies the ODE system (\ref{2.2}). Since, for the differential system (%
\ref{2.2}), $\left( u^{\ast },v^{\ast }\right) $ is the largest invariant
subset%
\begin{equation*}
\left \{ \left( u\left( x,t\right) ,v\left( x,t\right) \right) \in \Re \mid 
\frac{d}{dt}V\left( t\right) =0\right \} ,
\end{equation*}%
one gets (see \citep{Lisena2014, Yi2009}) via La Salle's invariance theorem%
\begin{equation*}
\lim_{t\rightarrow \infty }\left \vert u\left( x,t\right) -u^{\ast }\right
\vert =\lim_{t\rightarrow \infty }\left \vert v\left( x,t\right) -v^{\ast
}\right \vert =0,
\end{equation*}%
uniformly in $x$. Hence,%
\begin{equation}
\lim_{t\rightarrow \infty }\int_{\Omega }\left( u-u^{\ast }\right)
^{2}\left( x,t\right) =\lim_{t\rightarrow \infty }\int_{\Omega }\left(
v-v^{\ast }\right) ^{2}\left( x,t\right) =0.  \label{4.10}
\end{equation}
\end{proof}

\section{A Remark\label{Phi0Remark}}

Recall that condition (\ref{con1}) was required for Proposition \ref%
{PropAttract} to hold. However, it can be shown that if $\varphi \left(
0\right) >0$ and the inequality%
\begin{equation}
\lambda g\left( \delta \right) \leq \underset{u\rightarrow 0^{+}}{\lim }%
f\left( u\right)  \label{PhiGen}
\end{equation}%
is fulfilled, then the proposition still holds.

\section{Applications\label{SecApp}}

In this section, we will present two concrete examples that can be
considered special cases of system (\ref{1.1}). The two examples were
deliberately selected to cover two separate cases for $\varphi \left(
0\right) $; the first being $\varphi \left( 0\right) =0$ as stated in
condition (\ref{con1}), and the second being $\varphi \left( 0\right) >0$ as
stated in Subsection \ref{Phi0Remark}. For the two chosen examples, we will
apply the findings of this study to establish the global existence of
solutions and show that under the previously imposed conditions, the systems
are globally asymptotically stable.

\subsection{Lengyel--Epstein Model (CIMA\ Reaction)}

Consider the case%
\begin{equation}
f\left( u\right) =\frac{a-\mu u}{\varphi \left( u\right) }\text{ and }%
g\left( u\right) =\frac{u}{\varphi \left( u\right) },  \label{App1}
\end{equation}%
for $u\in \left( 0,\delta \right] $, with condition%
\begin{equation*}
\frac{u-\alpha }{\frac{a}{\mu }-u}\varphi \left( u\right) \geq \frac{%
u-\alpha }{\frac{a}{\mu }-\alpha }\varphi \left( \alpha \right) ,\text{ \ \ }%
u\in \left( 0,\frac{a}{\mu }\right) ,
\end{equation*}%
which is a special case of (\ref{con6}). Therefore, the example in (\ref%
{App1}) satisfies the conditions set out in this work. Substituting (\ref%
{App1}) in system (\ref{1.1}) yields%
\begin{equation}
\left \{ 
\begin{array}{l}
\frac{\partial u}{\partial t}-d_{1}\Delta u=a-\mu u-\lambda \varphi \left(
u\right) v,\text{ \ \ \ in }\mathbb{R}^{+}\times \Omega ,\smallskip \\ 
\frac{\partial v}{\partial t}-d_{2}\Delta v=\sigma \left( u-\varphi \left(
u\right) v\right) ,\text{ \ \ \ \ \ \ \ \ in }\mathbb{R}^{+}\times \Omega .%
\end{array}%
\right.  \label{App1-1}
\end{equation}%
It is easy to see that the resulting system (\ref{App1-1}) is the same as
the generalized Lengyel-Epstein system proposed in \citep{Abdelmalek2017a}.
Now, from condition (\ref{con3}), we have%
\begin{equation*}
f\left( \alpha \right) =\lambda g\left( \alpha \right) ,
\end{equation*}%
which gives%
\begin{equation*}
\alpha =\frac{a}{\lambda +\mu }.
\end{equation*}%
The equilibrium solution of the system is%
\begin{equation*}
\left( u^{\ast },v^{\ast }\right) =\left( \alpha ,\frac{\alpha }{\varphi
\left( \alpha \right) }\right) ,
\end{equation*}%
where%
\begin{equation*}
\alpha =\frac{a}{\lambda +\mu }.
\end{equation*}

For instance, let us suppose that $d_{1}=1,$ $\varphi \left( u\right) =\frac{%
u}{1+u^{2}},$ $\lambda =4,$ $\mu =1,$ and $d_{2}=\sigma c$. Also, for
modelling purposes, we will replace the positive constant $\sigma $ with the
product$\ \sigma b$. Substituting these parameters in system (\ref{1.1})
yields the original Lengyel--Epstein system \citep{Lengyel1991,Lengyel1992}%
\begin{equation}
\left \{ 
\begin{array}{l}
\frac{\partial u}{\partial t}=\Delta u+a-u-\frac{4uv}{1+u^{2}},\medskip \\ 
\frac{\partial v}{\partial t}=\left( \sigma c\right) \Delta v+\left( \sigma
b\right) \left( u-\frac{uv}{1+u^{2}}\right) .%
\end{array}%
\right.  \label{Ex1}
\end{equation}%
The system (\ref{Ex1}) represents DeKepper's chlorite--iodide--malonic
acid--starch chemical experiment \citep{DeKepper1990}, which was the first
ever realisation of Turing's instability \citep{Turing1952}.

The dynamics of the Lengyel--Epstein system (\ref{Ex1}) have been deeply
studied in the literature and thus will not be shown here. It suffices to
determine the range of $a$ for which the solutions are guaranteed to be
asymptotically stable. We notice that according to condition (\ref{con6}), $%
f(u)$ is decreasing regardless of $a$ for $u\in \left( 0,\alpha \right] $,
thus fulfilling the stability condition. For $u\in \left[ \alpha ,\delta
\right) $, the function $f(u)$ remains below $f(u^{\ast })$ as long as%
\begin{equation*}
a^{2}\leq \frac{125}{4}.
\end{equation*}%
This same result was achieved in \citep{Lisena2014} for the original
Lengyel--Epstein model, and in \citep{Abdelmalek2017b}\ for the generalised
system. The functions $f\left( u\right) $ and $\lambda g\left( u\right) $
are depicted in Figure \ref{Lengyel_Pap6_fg} for $a^{2}=\frac{125}{4}$ where 
$\alpha =\frac{a}{5}=1.118$. We note that for $a^{2}>\frac{125}{4}$, $f(u)$
rises above the horizontal line, thus not satisfying condition (\ref{con6}).
This, of course, does not necessarily imply that the solutions are not
globally asymptotically stable.

\begin{figure}[tbp]
\centering \includegraphics[width = 4.5in]{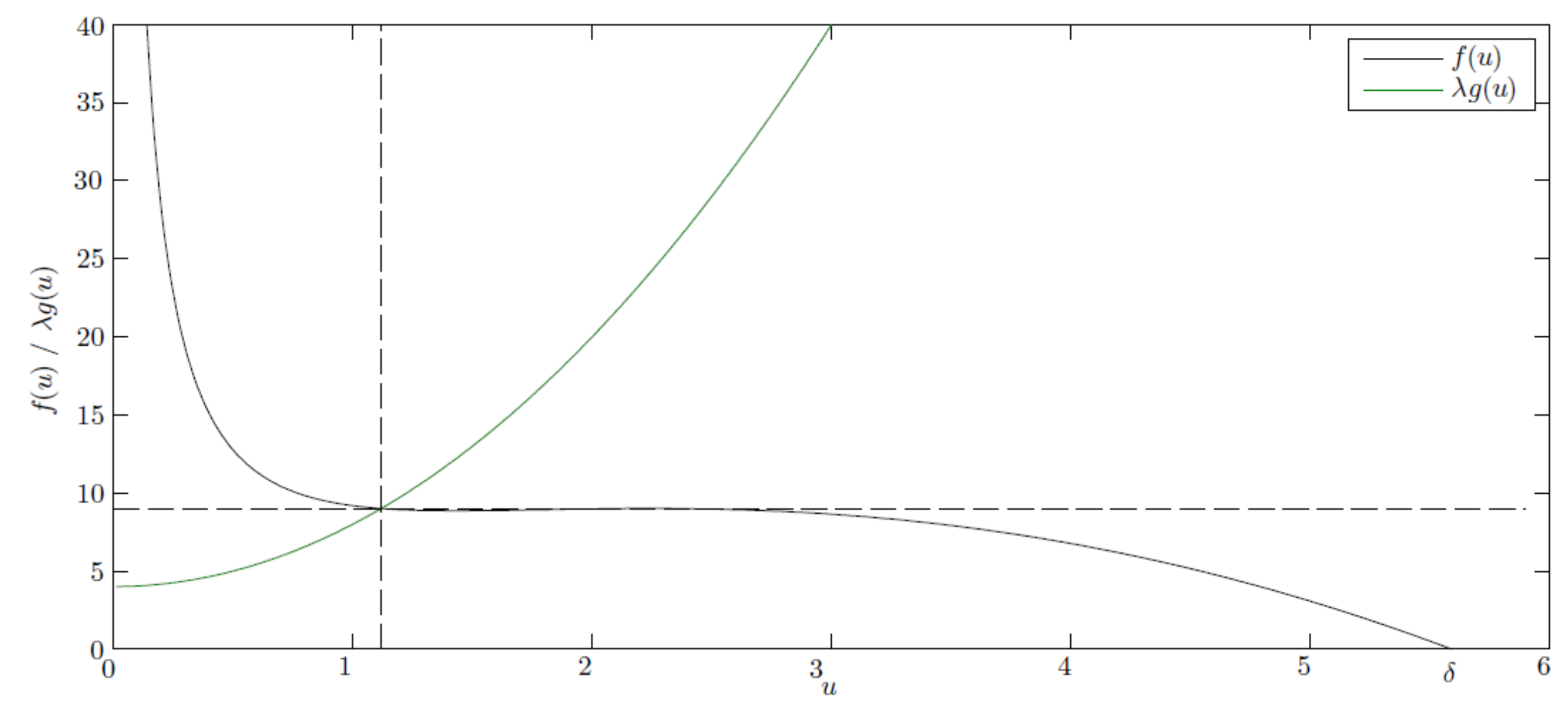}
\caption{The shape and intersection of functions $f(u)$ and $\protect\lambda %
g(u)$ for the Lengyel--Epstein reaction--diffusion model (\protect\ref{Ex1})
with maximum $a$.}
\label{Lengyel_Pap6_fg}
\end{figure}

The solutions of the Lengyel--Epstein model for $a^{2}=\frac{125}{4}$, $b=1$%
, $c=1$, and $\sigma =0.5$, are depicted in Figures \ref{Lengyel_Pap6_1}\
and \ref{Lengyel_Pap6_2}\ for the ODE and one--dimensional PDE cases,
respectively. The initial data is assumed to be%
\begin{equation*}
u\left( 0\right) =4\text{ and }v\left( 0\right) =3,
\end{equation*}%
in the ODE case and a slight sinusoidal perturbation is added in the
one-dimensional diffusion case%
\begin{equation*}
u\left( x,0\right) =4+0.2\sin \left( \frac{x}{5}\right) \text{ and }v\left(
x,0\right) =3+0.2\cos \left( \frac{x}{5}\right) .
\end{equation*}%
As expected from our analysis, we observe that the system is aymptotically
stable with the equilibrium solution given by (\ref{2.2a}) as%
\begin{equation*}
\left( u^{\ast },v^{\ast }\right) =\left( \alpha ,g\left( \alpha \right)
\right) =\left( 1.118,2.2499\right) .
\end{equation*}

\begin{figure}[tbp]
\centering \includegraphics[width = 4.5in]{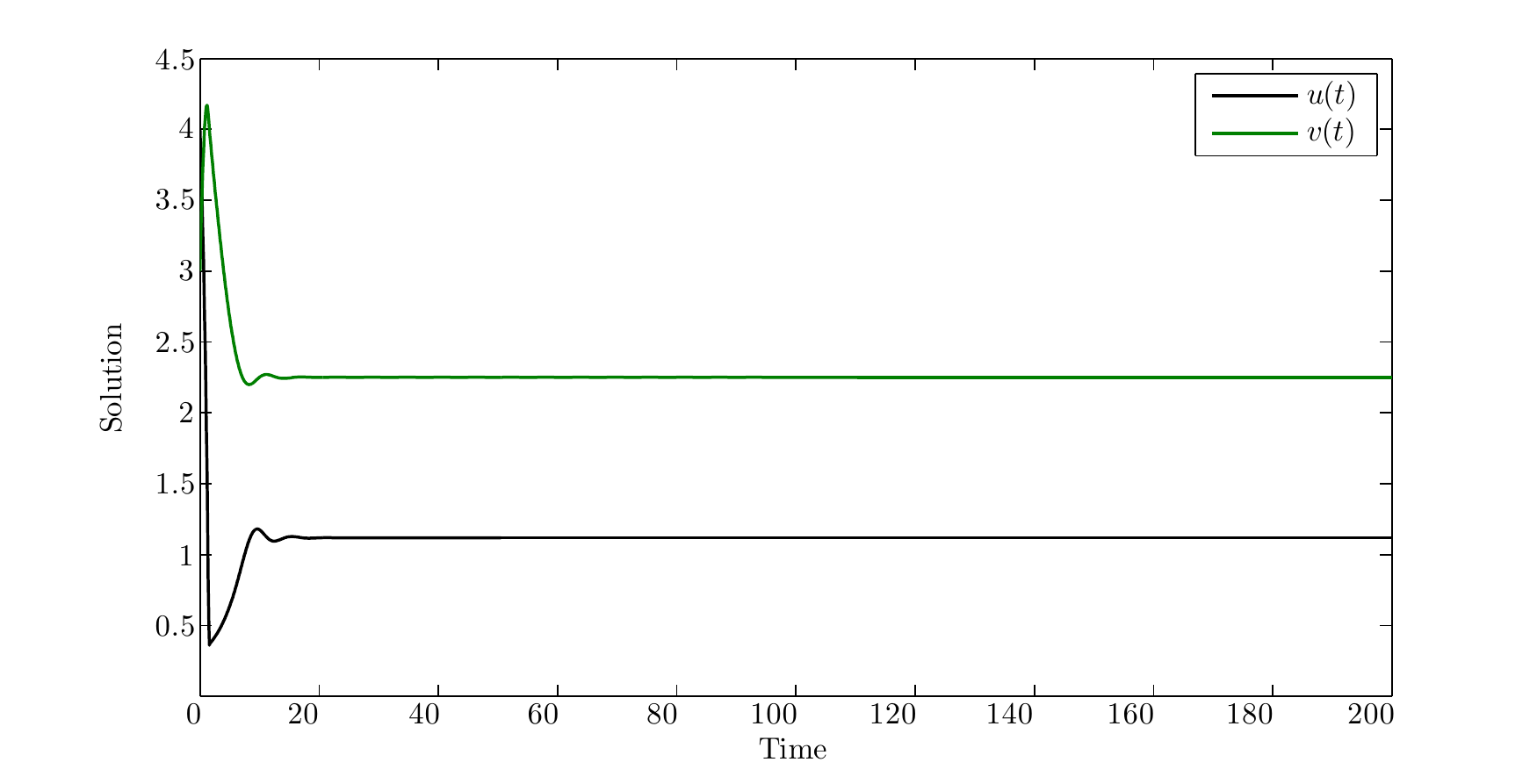}
\caption{Solutions of the Lengyel--Epstein reaction--diffusion model (%
\protect\ref{Ex1}) in the ODE case with the chosen parameters.}
\label{Lengyel_Pap6_1}
\end{figure}

\begin{figure}[tbp]
\centering \includegraphics[width = 4.5in]{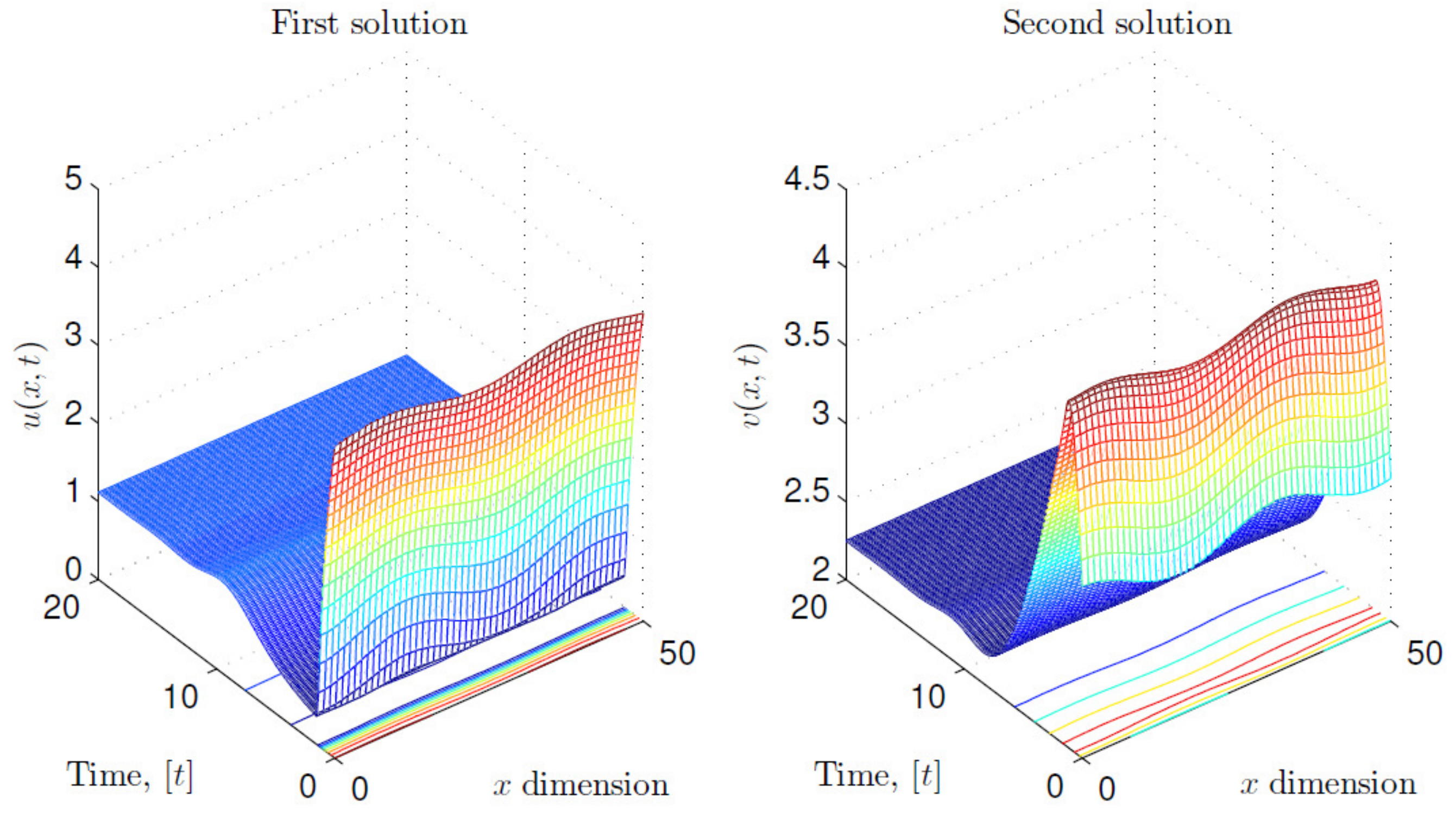}
\caption{Solutions of the Lenyel--Epstein reaction--diffusion model (\protect
\ref{Ex1}) in the one-dimensional diffusion case with the chosen parameters.}
\label{Lengyel_Pap6_2}
\end{figure}

\subsection{FitzHugh--Nagumo Model}

Here, we consider the FitzHugh-Nagumo reaction--diffusion model \cite%
{Oshita2003,Doelman2009}, which represents an excitation system such as a
neuron in Human physiology. The system is of the form%
\begin{equation}
\left \{ 
\begin{array}{l}
\frac{\partial u}{\partial t}=d_{1}\Delta u-u^{3}+\left( 1+\beta \right)
u^{2}-\beta u-v+I \\ 
\frac{\partial v}{\partial t}=d_{2}\Delta v+\varepsilon u-\varepsilon \gamma
v,%
\end{array}%
\right.  \label{FitzModel}
\end{equation}%
where $u\left( x,t\right) $ and $v\left( x,t\right) $ represent the
potential and sodium gating variable in the cell membrane, respectively. The
constants $\beta $, $\gamma $, and $\varepsilon $ are assumed to be positive
with $0<\beta <1$, $\varepsilon \ll 1$ (accounting for the slow kinetics of
the sodium channel). The constant $I$ represents external stimuli. It can be
easily observed that this model falls under the general framework proposed
in this paper, i.e. system (\ref{1.1}), with%
\begin{equation*}
\left \{ 
\begin{array}{l}
f\left( u\right) =-u^{3}+\left( 1+\beta \right) u^{2}-\beta u+I, \\ 
g\left( u\right) =\frac{u}{\gamma }, \\ 
\varphi \left( u\right) =1, \\ 
\lambda =1, \\ 
\sigma =\varepsilon \gamma .%
\end{array}%
\right.
\end{equation*}%
The constant $\delta $ is the solution of $f(u)=0$ as stated in (\ref{con1}%
), i.e.%
\begin{equation*}
-u^{3}+\left( 1+\beta \right) u^{2}-\beta u+I=0.
\end{equation*}

Note here that $\varphi \left( 0\right) \neq 0$ meaning that condition (\ref%
{con1}) is not satisfied. However, as stated in Remark \ref{Phi0Remark},
this does not affect the applicability of our results to this case as the
inequality (\ref{PhiGen}) holds for this particular example. The function $%
g\left( u\right) $ is clearly increasing and satisfies condition (\ref{con2}%
). We consider, for instance, the case where $\beta =0.139$, $\varepsilon
=0.008$, $\gamma =2.54$, and $I=2$. Hence, $\delta =1.7282$. Since, $\lambda
=1$ and the functions $f(u)$ and $g(u)$ intersect at a single point as shown
in Figure \ref{FitzHugh_Ex1_fg}, it is safe to say that the FitzHugh--Nagumo
system also satisfies condition (\ref{con3}). Note that the dynamic range
for $u\ $is $\left( 0,1.7282\right) $ and the unique equilibrium solution of
the system as defined by (\ref{2.2a}) is $\left( u^{\ast },v^{\ast }\right)
=\left( \alpha ,g\left( \alpha \right) \right) =\left( 1.5928,0.6273\right) $%
. Hence, clearly, condition (\ref{con4}) is satisfied.

Let us now examine the local asymptotic stability of the equilibrium in the
ODE scenario. Differentiating $f\left( u\right) $\ and substituting for $%
\alpha =1.5928$ can easily show that stability conditon (\ref{2.3}) is
satisfied. One thing that remains to be examined is the global asymptotic
stability of the system when diffusion is present. It is clear from the
shape of $f(u)$ in Figure \ref{FitzHugh_Ex1_fg} that it fulfills condition (%
\ref{con6}) over the range $u\in \left( 0,\delta \right) $.

The solutions of this particular example were obtained using Matlab
simulations and are depicted in Figures \ref{FitzHugh_Ex1_1} and \ref%
{FitzHugh_Ex1_2} for the ODE and one-dimensional diffusion cases,
respectively. Note that in the ODE case, the initial data is assumed to be%
\begin{equation*}
u\left( 0\right) =0.5\text{ and }v\left( 0\right) =1.2,
\end{equation*}%
whereas in the one-dimensional diffusion case, a slight sinusoidal
perturbation is added%
\begin{equation*}
u\left( x,0\right) =0.5+0.2\sin \left( \frac{x}{5}\right) \text{ and }%
v\left( x,0\right) =1.2+0.2\cos \left( \frac{x}{5}\right) .
\end{equation*}%
Clearly, both in the ODE and one-dimensional diffusion cases, the solutions
are stable and tend to the equilibrium solution.

\begin{figure}[tbp]
\centering \includegraphics[width = 4.5in]{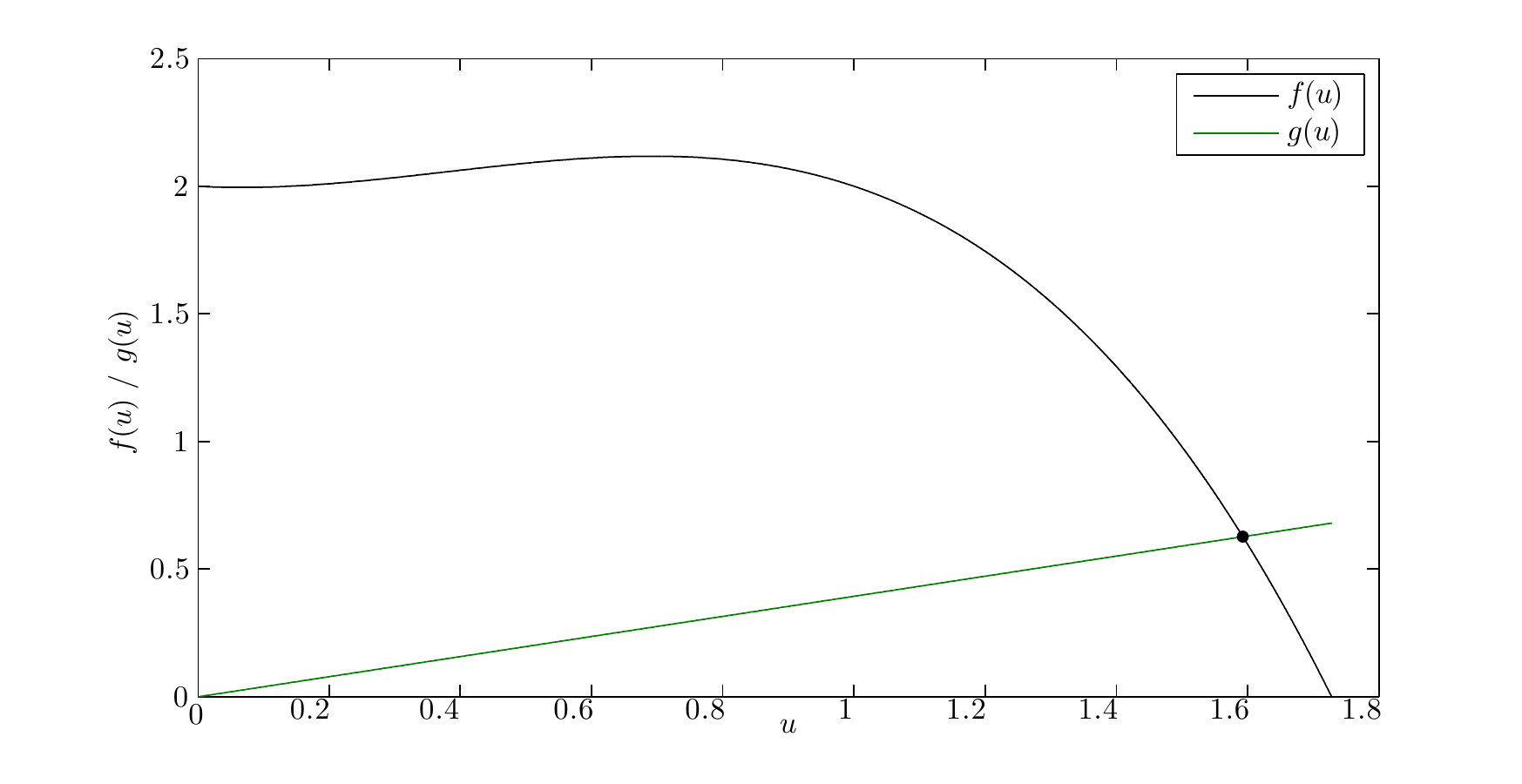}
\caption{The shape and intersection of functions $f(u)$ and $g(u)$ for the
FitzHugh-Nagumo reaction--diffusion model.}
\label{FitzHugh_Ex1_fg}
\end{figure}

\begin{figure}[tbp]
\centering \includegraphics[width = 4.5in]{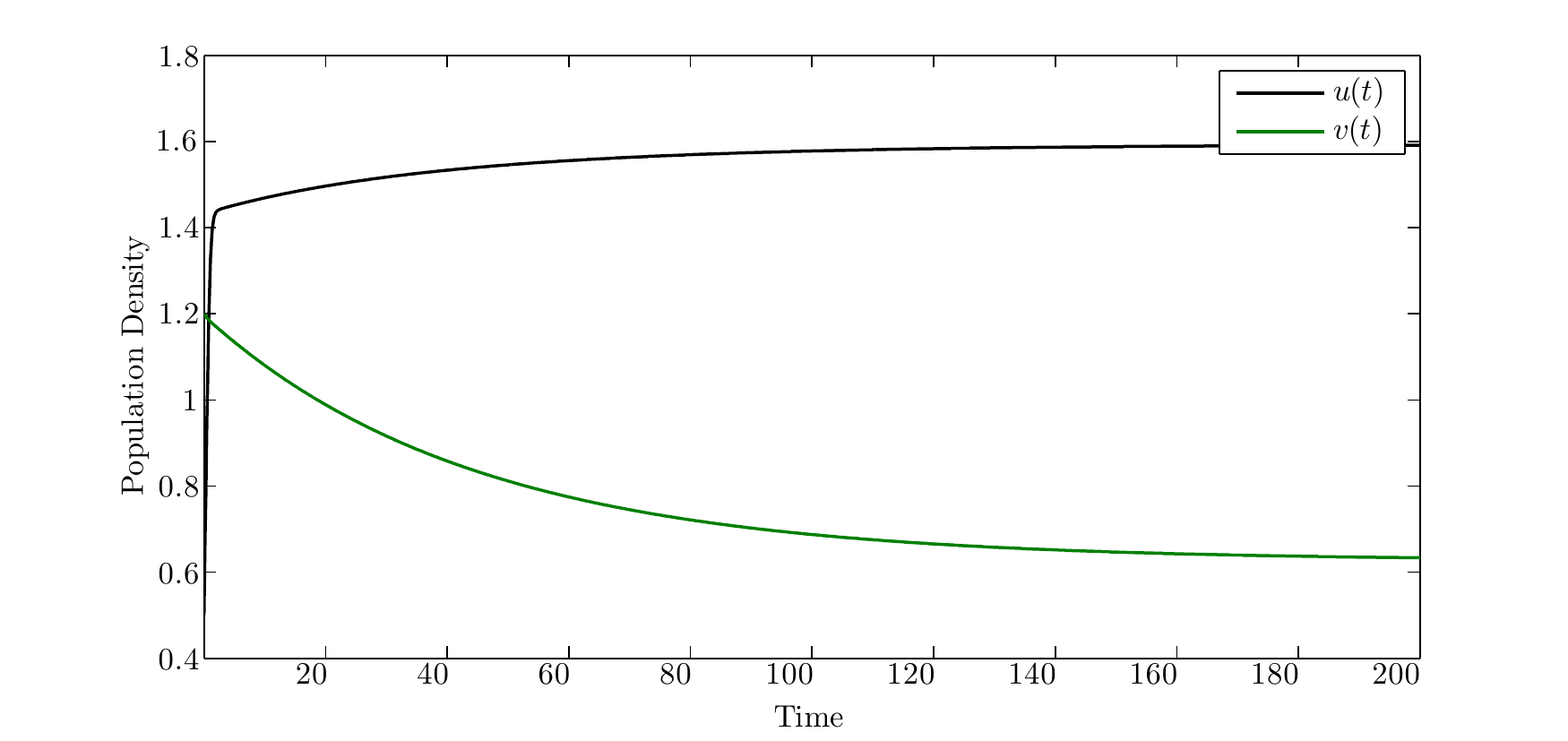}
\caption{Solutions of the FitzHugh-Nagumo reaction--diffusion model (\protect
\ref{FitzModel}) in the ODE case with the chosen parameters.}
\label{FitzHugh_Ex1_1}
\end{figure}

\begin{figure}[tbp]
\centering \includegraphics[width = 4.5in]{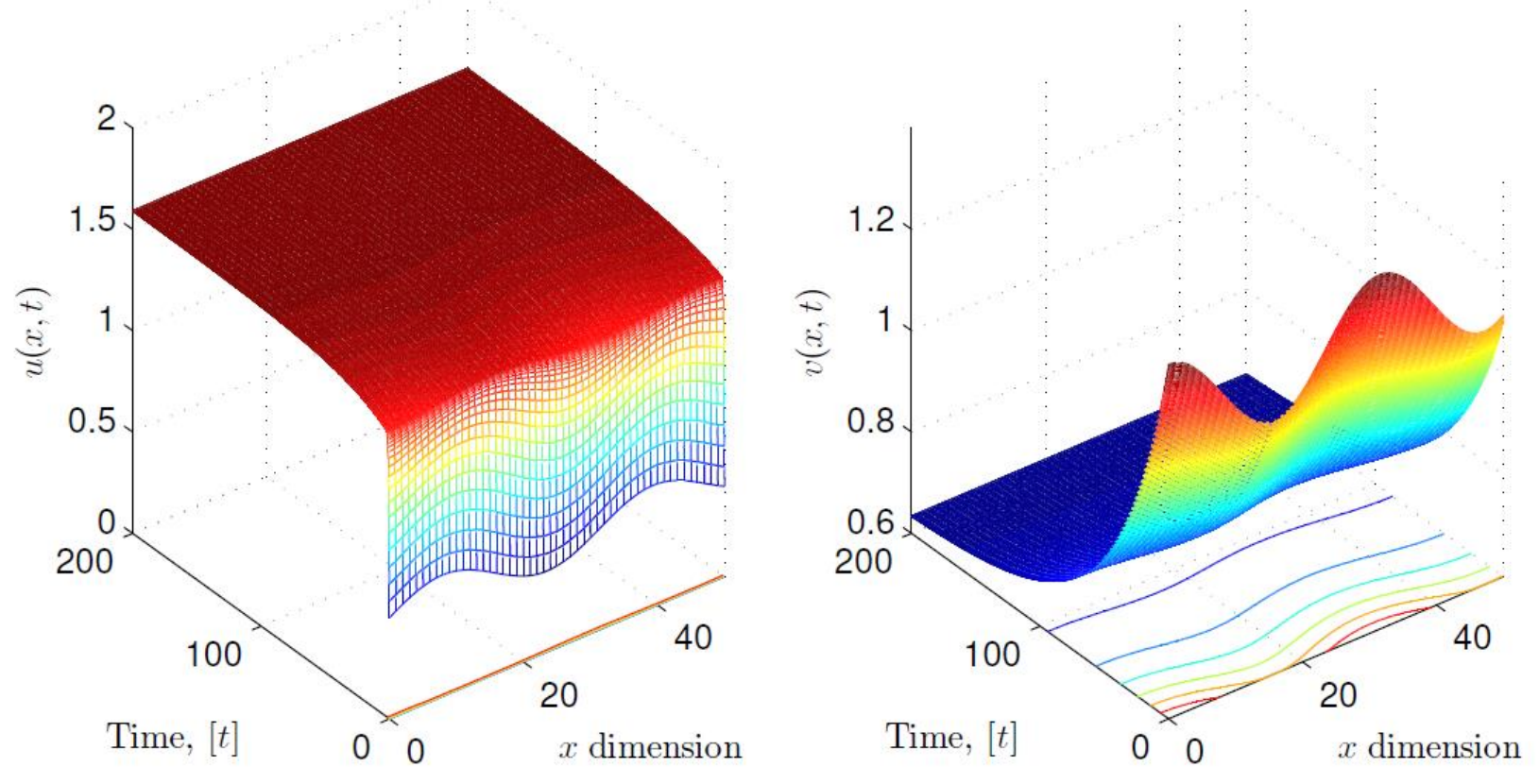}
\caption{Solutions of the FitzHugh-Nagumo reaction--diffusion model (\protect
\ref{FitzModel}) in the one-dimensional diffusion case with the chosen
parameters.}
\label{FitzHugh_Ex1_2}
\end{figure}

\FloatBarrier


\begin{thebibliography}{Abdelmalek et al.(to appear)}
\bibitem[Abdelmalek et al.(2017a)]{Abdelmalek2017a} S. Abdelmalek and S.
Bendoukha, On the global asymptotic stability of solutions to a generalized
Lengyel--Epstein system, Nonlinear Analysis: Real World Applications, 35,
397--413, (2017).

\bibitem[Abdelmalek et al.(2017b)]{Abdelmalek2017b} S. Abdelmalek, S.
Bendoukha and B. Rebiai, On the stability and non--existence of Turing
patterns for the generalised Lengyel--Epstein model, Math Meth Appl Sci.,
1--11, (2017).

\bibitem[Abdelmalek et al.(to appear)]{Abdelmalek2017c} S. Abdelmalek, S.
Bendoukha, B. Rebiai, and M. Kirane, Extended Global Asymptotic Stability
Conditions for a Generalized Reaction--Diffusion System, to appear.

\bibitem[Burton(1985)]{Burton1985} T.A. Burton, \textit{Stability and
periodic solutions of ordinary and functional differential equations},
Academic Press Inc., 1985.

\bibitem[Casten et al.(1977)]{Casten1977} R. Casten and C.J. Holland,
Stability properties of solutions to systems of reaction--diffusion
equations, SIAM J. Appl. Math. 33 (1977) 353--364.

\bibitem[De Kepper et al.(1990)]{DeKepper1990} P. De Kepper, J. Boissonade,
and I. Epstein, Chlorite-iodide reaction: a versatile system for the study
of nonlinear dynamical behavior, J. Phys. Chem. 94, 6525-6536 (1990).

\bibitem[Doelman et al.(2009)]{Doelman2009} A. Doelman, P. van Heijster and
T. J. Kaper, Pulse dynamics in a three component system: existence analysis,
J. Dyn. Diff. Equat., 21(5), 73--115, (2009).

\bibitem[Friedman(1964)]{Friedman1964} A. Friedman, \textit{Partial
differential equations of parabolic type}, Prentice--Hall: Englewood Cliffs,
1964.

\bibitem[Jang et al.(2004)]{Jang2004} J. Jang, W. Ni and M. Tang, Global
bifurcation and structure of turing patterns in the 1-D Lengyel--Epstein
model, J. Dyn. Diff. Equ., 16(2), 297--320, (2004).

\bibitem[Lengyel et al.(1991)]{Lengyel1991} I. Lengyel and I. R. Epstein,
Modeling of turing structures in the chlorite--iodide--malonic acid--starch
reaction system, Science, 251, 650--652, (1991).

\bibitem[Lengyel et al.(1992)]{Lengyel1992} I. Lengyel and I. R. Epstein, A
chemical approach to designing Turing patterns in reaction-diffusion system,
Proc. Nat. Acad. Sci USA, 89, 3977--3979, (1992).

\bibitem[Lisena(2014)]{Lisena2014} B. Lisena, On the global dynamics of the
Lengyel--Epstein system, Appl. Math. \&Comp. 249, 67--75, (2014).

\bibitem[Mottoni(1979)]{Mottoni1979} P. De Mottoni and F. Rothe, Convergence
to homogeneous equilibrium state for generalized Volterra--Lotka systems
with diffusion, SIAM J. Appl. Math. 37(3), 648--663, (1979).

\bibitem[Ni et al.(2005)]{Ni2005} W. M. Ni and M. Tang, Turing patterns in
the Lengyel--Epstein system for the CIMA reaction, Trans. Amer. Math. Soc.
357, 3953--3969, (2005).

\bibitem[Oshita et al.(2003)]{Oshita2003} Y. Oshita and I. Ohnishi, Standing
pulse solutions for the FitzHugh-Nagumo equations, Japan J. Indust. Appl.
Math., 20, 101--115, (2003).

\bibitem[Turing(1952)]{Turing1952} A.Turing, The chemical basis of
morphogenesis, Philos.Trans. R. Soc. Lond. Ser. 237(641), 37--72, (1952).

\bibitem[Wang et al.(2013)]{Wang2013} L. Wang and H. Zhao, Hopf bifurcation
and turing instability of 2-D Lengyel--Epstein system with
reaction--diffusion terms, Appl. Math. Comput. 219, 9229--9244, (2013).

\bibitem[Weinberger (1975)]{Weinberger1975} H. F. Weinberger, \textit{%
Invariant sets for weakly coupled parabolic and elliptic systems}, Rend.
Mat. 8, 295--310, (1975).

\bibitem[Yi et al.(2008)]{Yi2008} F. Yi, J. Wei and J. Shi, Diffusion-driven
instability and bifurcation in the Lengyel--Epstein system, Nonlinear Anal.
RWA 9, 1038--1051, (2008).

\bibitem[Yi et al.(2009)]{Yi2009} F. Yi, J. Wei and J. Shi, Global
asymptotic behavior of the Lengyel--Epstein reaction--diffusion system,
Appl. Math. Lett. 22, 52--55, (2009).
\end{thebibliography}
\end{document}